\documentclass[11pt]{article}

\usepackage[T1]{fontenc}
\usepackage[utf8]{inputenc}
\usepackage{lmodern}
\usepackage{amsmath,amssymb,amsthm,mathtools}
\usepackage{geometry}
\usepackage{microtype}
\usepackage[hyperfootnotes=false]{hyperref}
\usepackage{enumitem}
\usepackage{cite}

\hypersetup{colorlinks=true,linkcolor=blue,citecolor=blue,urlcolor=blue}
\newtheorem{theorem}{Theorem}[section]
\newtheorem{proposition}[theorem]{Proposition}
\newtheorem{lemma}[theorem]{Lemma}
\newtheorem{corollary}[theorem]{Corollary}
\theoremstyle{definition}
\newtheorem{definition}[theorem]{Definition}
\newtheorem{remark}[theorem]{Remark}

\newcommand{\C}{\mathbb C}
\newcommand{\R}{\mathbb R}
\newcommand{\Z}{\mathbb Z}
\newcommand{\N}{\mathbb N}
\newcommand{\T}{\mathbb T}
\newcommand{\dd}{\,\mathrm d}
\newcommand{\Ree}{\operatorname{Re}}
\newcommand{\Ims}{\operatorname{Im}}

\title{\textbf{Zero-Density Concentration for Dirichlet Polynomials}}
\author{
Eric Dubon\\[0.3em]
\href{https://orcid.org/0000-0003-3400-7756}
{ORCID: 0000-0003-3400-7756}
}
\date{September 2026}

\begin{document}
\maketitle

\begingroup
\renewcommand{\thefootnote}{}
\footnotetext{%
Department of Mathematics, University of Alicante,
03080 Alicante, Spain.
E-mail: \texttt{eric.dubon@ua.es}}
\addtocounter{footnote}{-1}
\endgroup

\begin{abstract}
We prove that broad families of finite Dirichlet sums exhibit deterministic concentration of normalized vertical zero density on a single line.  The main result gives a general criterion under which the normalized Jessen potentials converge locally uniformly to a piecewise-linear convex function with a single corner.  The associated normalized measures describing the distribution of real parts of zeros in vertical mean density then converge weakly to a Dirac mass.  Thus, for each fixed truncation, zeros may occupy a nontrivial range of real parts, while asymptotically their normalized vertical density concentrates on one line.

The proof combines the finite Bohr lift with a translation-uniform anti-concentration estimate for isolated prime coordinates.  We apply the criterion to partial sums of the Riemann zeta function, fixed Dirichlet $L$-functions, and primitive holomorphic Hecke eigenforms of fixed level, trivial Dirichlet character, and without complex multiplication.  The concentration line is $\Ree s=1/2$ in the normalized setting and becomes $\Ree s=k/2$ for the classical Fourier coefficients of a form of weight $k$.
\end{abstract}

\medskip
\noindent\textbf{Keywords.} Dirichlet polynomials; zeros of partial $L$-functions; zero-density concentration; Jessen function; almost periodic functions; Bohr lift; Mahler measure; anti-concentration; Steinhaus sums; Sato--Tate; Rankin--Selberg.

\smallskip
\noindent\textbf{2020 Mathematics Subject Classification.} Primary: 11M41, 11M06; Secondary: 42A75, 30B50, 11F30, 60E10.

\section{Introduction}

Let
\begin{equation}\label{eq:intro-P}
P_N(s)=\sum_{n\le N}a_n n^{-s},\qquad s=\sigma+it,
\end{equation}
be a finite Dirichlet sum with $a_1\neq0$.  For each fixed $N$, $P_N$ is an exponential polynomial in the vertical variable.  Its zeros may occupy a wide range of real parts, and a substantial literature has developed around their location, existence, density, and zero-free regions.

For the special case
\begin{equation}\label{eq:zetaN-intro}
\zeta_N(s)=\sum_{n\le N}n^{-s},
\end{equation}
Montgomery studied the extreme right-hand zeros \cite{Montgomery1983}; Borwein, Fee, Ferguson and van der Waall gave extensive numerical and structural information \cite{BorweinEtAl2007}; Gonek and Ledoan obtained zero-counting results and information on the distribution of real parts in finite-height rectangles \cite{GonekLedoan2010}; and Mora proved that, for every sufficiently large fixed $N$, the set of real parts of the zeros is dense throughout the corresponding critical strip \cite{Mora2013}.  These results show in particular that the support of the zero set of a fixed truncation can be much wider than any single distinguished vertical line.

The same circle of questions has been pursued for broader arithmetic Dirichlet polynomials.  Ledoan, Roy and Zaharescu studied zero-free regions and zero counts for partial sums of Dedekind zeta functions of cyclotomic fields \cite{LedoanRoyZaharescu2014}.  Roy and Vatwani developed zero-free regions and zero-density estimates for partial sums attached to multiplicative functions \cite{RoyVatwani2019}, and later proved sharp existence results for zeros of the corresponding Dirichlet polynomials \cite{RoyVatwani2021}.  In a complementary direction, the author used Bohr equivalence to prove density of attainable real parts for partial sums of the Dirichlet lambda, beta and eta functions \cite{Dubon2025}.  Very recently, Kerr, Klurman and Thorner \cite{KerrKlurmanThorner2026} proved that unusually large partial sums of Dirichlet coefficients can force low-lying zeros of the underlying infinite $L$-function away from the critical line.  Their zeros are zeros of $L(s,\pi)$ itself, whereas the present paper concerns zeros of the finite Dirichlet truncations.

There is also related work for modular forms with a different approximating object.  Li, Roy and Zaharescu considered symmetrized approximations to Hecke $L$-functions obtained by adjoining to a truncated Dirichlet series the dual term supplied by the functional equation \cite{LiRoyZaharescu2016}.  Their approximants are structurally different from the raw truncations considered here, and their asymptotic regime couples the truncation parameter with the height.

The question addressed in this paper is instead the following: \emph{where does the normalized vertical density of zeros concentrate as the truncation length tends to infinity?}  The distinction between support and mass is essential.  A family can have zeros with real parts dense throughout a large strip while an asymptotically negligible proportion of its vertical zero density lies away from one distinguished line.

For fixed $N$, the relevant convex potential is the Jessen function
\begin{equation}\label{eq:Jessen-intro}
J_{P_N}(\sigma)
=
\lim_{T\to\infty}\frac1{2T}
\int_{-T}^{T}\log|P_N(\sigma+it)|\dd t.
\end{equation}
The existence of this mean follows from Jessen and Tornehave \cite[Theorem~5]{JessenTornehave1945}, while its convexity follows from \cite[Theorem~7]{JessenTornehave1945}. Their Jensen formula for ordinary Dirichlet series\cite[Theorem~31]{JessenTornehave1945} identifies $\frac{1}{2\pi}J_{P_N}''$, in the sense of distributions, with the vertical zero-frequency measure. Recent work of Andersson \cite{Andersson2026} develops related mean-counting formulae for vertical limits of
infinite Dirichlet series.  Our setting is finite-dimensional and introduces the asymptotic parameter $N\to\infty$.

The novelty here is a limit theorem for the whole Jessen potential.  Under an explicit isolated-prime hypothesis, the normalized potentials converge locally uniformly to a piecewise-linear function with one corner; the corresponding normalized zero-density measures converge to a single Dirac mass.  For zeta and fixed Dirichlet $L$-functions the limiting line is $\Ree s=1/2$.  The same line occurs for normalized holomorphic Hecke eigenvalues, while the classical Fourier-coefficient normalization moves it to $\Ree s=k/2$.

The principal analytic input is a translation-uniform logarithmic anti-concentration estimate for sums of independent Steinhaus variables with comparable weights.  More precisely, if $Z_1,\ldots,Z_m$ are independent Steinhaus variables and $b_1,\ldots,b_m>0$ satisfy $\max_j b_j/\min_j b_j\le K$, then, uniformly in $a\in\C$,
\[
\mathbb E\left(\log\left|a+\sum_{j=1}^m b_jZ_j\right|\right)
\ge
\frac12\log\sum_{j=1}^m b_j^2-C_K.
\]
The uniformity in the translation is essential: after conditioning on the remaining coordinates of the finite Bohr torus, the isolated prime-coordinate block is translated by a complex value depending on those conditioned coordinates.

All density statements below use the Jessen mean-motion order of limits: for each fixed $N$ the height $T$ tends to infinity first, and only afterwards $N\to\infty$.  No coupled $(N,T)$ limit is asserted.  This is different from the finite-height regime studied, for example, by Gonek and Ledoan \cite{GonekLedoan2010}.

The paper is organized as follows. Section 2 states the abstract zero-density concentration criterion and its three arithmetic applications. Sections 3–5 develop the Bohr–Jessen framework, prove the translation-uniform anti-concentration estimate, and establish the abstract criterion. Sections 6–8 verify its hypotheses for zeta partial sums, fixed Dirichlet \(L\)-functions, and holomorphic Hecke eigenforms, respectively.
\section{Main results}\label{sec:main-results}

Let
\begin{equation}\label{eq:MN-intro}
M_N:=\max\{n\le N:a_n\neq0\}.
\end{equation}
When $a_1\neq0$, Proposition~\ref{prop:total-mass} below shows that $J_{P_N}''$ has total mass $\log M_N$.  Recall also that a finite convex function on an open interval has monotone one-sided derivatives \cite[Theorem~24.1]{Rockafellar1970}; consequently its distributional second derivative is a positive Radon measure, namely the Lebesgue--Stieltjes measure associated with its increasing derivative.

\begin{definition}[Zero-density concentration]\label{def:concentration}
Assume $M_N\to\infty$.  We say that the family $\{P_N\}$ exhibits \emph{zero-density concentration} on the line $\Ree s=\alpha$ if
\begin{equation}\label{eq:concentration-def}
\nu_N:=\frac{1}{\log M_N}J_{P_N}''
\xrightarrow{\mathrm w} \delta_\alpha
\end{equation}
weakly as probability measures on $\R$.
\end{definition}

This definition has a direct zero-counting interpretation.
By Jessen and Tornehave \cite[Theorem~31]{JessenTornehave1945},
for every $a<b$,
\begin{equation}\label{eq:JT-main-intro}
\frac{J_{P_N}''((a,b))}{2\pi}
=
\lim_{T\to\infty}\frac1{2T}
\#\{\rho=\beta+i\gamma:P_N(\rho)=0,\ 
a<\beta<b,\ |\gamma|<T\},
\end{equation}
where zeros are counted with multiplicity.
Thus \eqref{eq:concentration-def} means that asymptotically all normalized vertical zero density lies in every fixed neighborhood of the line $\Ree s=\alpha$.

For $P_N$ as in \eqref{eq:intro-P}, normalize $a_1=1$ and set
\begin{equation}\label{eq:global-energy-intro}
E_N(\sigma):=\sum_{n\le N}|a_n|^2n^{-2\sigma}.
\end{equation}

\begin{theorem}[Isolated-prime zero-density criterion]\label{thm:abstract-main}
Let $P_N(s)=\sum_{n\le N}a_n n^{-s}$ with $a_1=1$, and let $M_N$ be given by \eqref{eq:MN-intro}.  Suppose there exist $\alpha\in\R$ and subsets
\[
\mathcal Q_N\subset\{p\text{ prime}:N/2<p\le N\}
\]
with $\#\mathcal Q_N\to\infty$ such that:

\begin{enumerate}[label=\textup{(H\arabic*)}]
\item\label{H1intro} for every $\sigma\in\R$,
\begin{equation}\label{eq:H1-intro}
\frac{1}{2\log N}\log E_N(\sigma)
\longrightarrow(\alpha-\sigma)_+;
\end{equation}

\item\label{H2intro} for every compact interval $I\subset(-\infty,\alpha)$ there is $K_I\ge1$ such that, for all sufficiently large $N$, all $\sigma\in I$, and all $p,q\in\mathcal Q_N$,
\begin{equation}\label{eq:H2comp-intro}
K_I^{-1}
\le
\frac{|a_p|p^{-\sigma}}{|a_q|q^{-\sigma}}
\le K_I,
\end{equation}
and, with
\begin{equation}\label{eq:isolated-energy-intro}
B_N(\sigma):=\sum_{p\in\mathcal Q_N}|a_p|^2p^{-2\sigma},
\end{equation}
one has
\begin{equation}\label{eq:H2energy-intro}
\frac{1}{2\log N}\log B_N(\sigma)
\longrightarrow\alpha-\sigma
\end{equation}
locally uniformly on $(-\infty,\alpha)$.
\end{enumerate}

Then
\begin{equation}\label{eq:abstract-potential-intro}
\frac{J_{P_N}(\sigma)}{\log N}
\longrightarrow
(\alpha-\sigma)_+
\end{equation}
locally uniformly on $\R$, and
\begin{equation}\label{eq:abstract-measure-intro}
\frac1{\log M_N}J_{P_N}''
\xrightarrow{\mathrm w}\delta_\alpha.
\end{equation}
In particular, the normalized vertical zero density concentrates on the line $\Ree s=\alpha$.
\end{theorem}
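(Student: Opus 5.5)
The plan is to prove \eqref{eq:abstract-potential-intro} by matching upper and lower bounds, and then to deduce \eqref{eq:abstract-measure-intro} from it using convexity and the total-mass identity of Proposition~\ref{prop:total-mass}.

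\emph{Upper bound.} Jensen's inequality applied to the vertical mean, together with the Besicovitch--Parseval identity for the finite exponential polynomial $P_N(\sigma+it)$, gives
\[
J_{P_N}(\sigma)\le \tfrac12\log \lim_{T\to\infty}\frac1{2T}\int_{-T}^T|P_N(\sigma+it)|^2\dd t=\tfrac12\log E_N(\sigma).
\]
By \ref{H1intro} this yields $\limsup_N J_{P_N}(\sigma)/\log N\le(\alpha-\sigma)_+$ pointwise.

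\emph{Lower bound, region $\sigma>\alpha$.} Here I use the Bohr--Jessen mean value property. For an analytic almost periodic function whose constant term is $1$, the Jessen function is nonnegative. To see this, lift $P_N$ to the finite torus, where it becomes a polynomial in the prime coordinates with constant term $a_1=1$; iterated Jensen's inequality (subharmonicity of $\log|\cdot|$, or the Mahler measure bound) then gives $J_{P_N}(\sigma)\ge\log|a_1|=0$ for every $\sigma$. This already settles $\sigma\ge\alpha$.

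\emph{Lower bound, region $\sigma<\alpha$.} This is the main step. Let $\mathcal Q_N\subset(N/2,N]$. Each $p\in\mathcal Q_N$ satisfies $p^2>N$, so any $n\le N$ divisible by $p$ equals $p$ itself. Hence on the Bohr torus
\[
P_N=A+\sum_{p\in\mathcal Q_N}a_pp^{-\sigma}Z_p,
\]
where $A$ depends only on the remaining coordinates and the $Z_p$ are independent Steinhaus variables. By the Jessen--Tornehave/Kronecker identification of $J_{P_N}(\sigma)$ with the torus integral $\mathbb E\log|P_N|$, we can condition on the other coordinates. Writing $b_p=|a_p|p^{-\sigma}$ and absorbing the phases of $a_p$ into $Z_p$, the translation-uniform anti-concentration estimate applies with $K=K_I$ from \eqref{eq:H2comp-intro}. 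It gives $J_{P_N}(\sigma)\ge\frac12\log B_N(\sigma)-C_{K_I}$ uniformly for $\sigma\in I$. Dividing by $\log N$ and using \eqref{eq:H2energy-intro}, we get $\liminf_N J_{P_N}(\sigma)/\log N\ge \alpha-\sigma$ uniformly on $I$. The delicate points are that the anti-concentration constant must be uniform in the translation $A$ (this is the whole reason for that lemma) and that $\#\mathcal Q_N$ is allowed to grow. Both are covered by the stated estimate, so the main obstacle is really that estimate itself, which is assumed proved earlier.

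\emph{Local uniformity and the measure.} The functions $f_N=J_{P_N}/\log N$ are convex and converge pointwise to the continuous convex function $(\alpha-\sigma)_+$. Pointwise convergence of convex functions to a finite convex limit is automatically locally uniform (Rockafellar, Theorem~10.8), which gives \eqref{eq:abstract-potential-intro}. For the measures, convergence of convex functions also gives convergence of the one-sided derivatives $f_N'(\sigma)\to-\mathbf 1_{\sigma<\alpha}$ at every $\sigma\neq\alpha$. Hence $f_N''((a,b))\to1$ if $a<\alpha<b$, and $\to0$ if $\alpha\notin[a,b]$. By Proposition~\ref{prop:total-mass}, $J_{P_N}''$ has total mass $\log M_N$, so $f_N''$ has mass $\log M_N/\log N$. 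Taking a large interval around $\alpha$ shows $\log M_N/\log N\to1$, using that the derivatives of $f_N$ at $\pm\infty$ are $0$ and $-\log M_N/\log N$. Therefore $\nu_N=(\log N/\log M_N)f_N''$ is a probability measure that places asymptotically full mass on every neighborhood of $\alpha$, which gives weak convergence to $\delta_\alpha$.
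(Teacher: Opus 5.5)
Your proof is correct. For the potential convergence it follows the paper's argument:
\begin{itemize}
\item the $L^2$ upper bound $J_{P_N}\le\frac12\log E_N$ (you take it along vertical lines via Parseval, the paper on the torus via orthogonality of characters, which is the same identity);
\item $J_{P_N}\ge0$ by iterated Jensen;
\item conditioning on the coordinates outside $\mathcal Q_N$ and applying the translation-uniform Steinhaus estimate;
\item Rockafellar's Theorem~10.8 for local uniformity.
\end{itemize}
For the Steinhaus step you should also state that $b_p>0$, because the ratio bounds force $a_p\neq0$, and that $\#\mathcal Q_N\ge5$ for large $N$.

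The genuine difference is the passage to the measures. The paper pairs $J_{P_N}''$ with $\varphi\in C_c^\infty(\R)$ and moves both derivatives onto $\varphi$. It then uses local uniform convergence and an integration by parts to get $\langle V,\varphi''\rangle=\varphi(\alpha)$, and upgrades vague to weak convergence by density plus a tightness argument.

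You instead use that pointwise convergence of convex functions forces convergence of one-sided derivatives wherever the limit is differentiable. This gives $f_N''((a,b))=f_N'(b-)-f_N'(a+)\to1$ for $a<\alpha<b$ directly. For probability measures, mass tending to $1$ on every neighbourhood of $\alpha$ already gives weak convergence, so you need no distribution theory and no vague-to-weak step.

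The two proofs also get $\log M_N/\log N\to1$ differently. The paper reads it off (H2): some $p\in\mathcal Q_N$ has $a_p\neq0$, so $N/2<M_N\le N$, with the explicit rate $1-\log 2/\log N$. You extract it from the potential limit and the slope $-\log M_N/\log N$ at $-\infty$.

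That extraction only yields $\liminf_N\log M_N/\log N\ge1$, since $f_N'(a+)\ge-\log M_N/\log N$ for any $a<\alpha$. The matching upper bound is the trivial $M_N\le N$. It is not a consequence of the potential convergence, which by itself cannot rule out mass escaping to $-\infty$, so state it explicitly.
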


\begin{remark}\label{rem:further-families}
The criterion is not intrinsically restricted to the arithmetic families considered below.  Its application to further families reduces to two arithmetic inputs: the quadratic-energy asymptotic in hypothesis \ref{H1intro}, and a sufficiently large family of isolated primes on which the normalized prime coefficients are bounded above and away from zero, as required in hypothesis \ref{H2intro}.  A suitable Sato--Tate-type equidistribution theorem provides a natural mechanism for the latter condition.
\end{remark}

\begin{theorem}[Riemann zeta partial sums]\label{thm:zeta-intro}
For
\[
\zeta_N(s)=\sum_{n\le N}n^{-s},
\]
let $J_N$ denote its Jessen function.  Then
\begin{equation}\label{eq:zeta-potential-intro}
\frac{J_N(\sigma)}{\log N}
\longrightarrow
\left(\frac12-\sigma\right)_+
\end{equation}
locally uniformly on $\R$, and
\begin{equation}\label{eq:zeta-measure-intro}
\frac1{\log N}J_N''\xrightarrow{\mathrm w}\delta_{1/2}.
\end{equation}
\end{theorem}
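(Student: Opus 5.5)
The proof will apply Theorem~\ref{thm:abstract-main} with $a_n=1$ for all $n$, $\alpha=1/2$, and $\mathcal Q_N:=\{p \text{ prime}: N/2<p\le N\}$. Here $M_N=N$, so the normalization $\log M_N$ in \eqref{eq:abstract-measure-intro} is exactly $\log N$. Also $\#\mathcal Q_N=\pi(N)-\pi(N/2)\sim N/(2\log N)\to\infty$ by the prime number theorem; Chebyshev bounds would suffice. So \eqref{eq:zeta-potential-intro} and \eqref{eq:zeta-measure-intro} follow once \ref{H1intro} and \ref{H2intro} are verified.

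For \ref{H1intro}, we have $E_N(\sigma)=\sum_{n\le N}n^{-2\sigma}$, and we compare it with an integral in three regimes.
\begin{itemize}
\item If $\sigma<1/2$, then $E_N(\sigma)\asymp_\sigma N^{1-2\sigma}$, so $\frac{1}{2\log N}\log E_N(\sigma)\to \tfrac12-\sigma$.
\item If $\sigma=1/2$, then $E_N=\log N+O(1)$, and $\log\log N/\log N\to0$.
\item If $\sigma>1/2$, then $1\le E_N(\sigma)\le\zeta(2\sigma)$, so the limit is $0$.
\end{itemize}
In all three cases the limit is $(1/2-\sigma)_+$.

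For \ref{H2intro}, fix a compact $I\subset(-\infty,1/2)$ and take $p,q\in\mathcal Q_N$. Since $p/q\in(1/2,2)$, the ratio $p^{-\sigma}/q^{-\sigma}=(q/p)^{\sigma}$ lies between $2^{-|\sigma|}$ and $2^{|\sigma|}$. So $K_I:=2^{\max_{\sigma\in I}|\sigma|}$ works uniformly in $N$.

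For the energy, $B_N(\sigma)=\sum_{N/2<p\le N}p^{-2\sigma}$. Each term lies between $\min(1,2^{2\sigma})N^{-2\sigma}$ and $\max(1,2^{2\sigma})N^{-2\sigma}$, so
\[
B_N(\sigma)=\#\mathcal Q_N\cdot N^{-2\sigma}\cdot e^{O_I(1)}.
\]
The Chebyshev bounds give $\#\mathcal Q_N=N^{1+o(1)}$, more precisely $\log\#\mathcal Q_N=\log N-\log\log N+O(1)$. Hence
\[
\frac{1}{2\log N}\log B_N(\sigma)=\tfrac12-\sigma+O\!\left(\frac{\log\log N+C_I}{\log N}\right),
\]
uniformly for $\sigma\in I$. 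This is the required locally uniform convergence in \eqref{eq:H2energy-intro}.

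No step here is a real obstacle; the difficulty is carried by Theorem~\ref{thm:abstract-main}. The points needing care are:
\begin{itemize}
\item the uniformity in $\sigma$ for \eqref{eq:H2energy-intro}, which is handled by the explicit $O_I(1)$ bound above;
\item the boundary case $\sigma=1/2$ in \ref{H1intro}, which \ref{H1intro} requires pointwise;
\item checking that $M_N=N$, so that the normalization by $\log N$ in the theorem matches $\log M_N$.
\end{itemize}
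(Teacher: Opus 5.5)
Your proposal is correct and matches the paper's proof step by step: you apply Theorem~\ref{thm:abstract-main} with $a_n=1$, $\alpha=1/2$, $M_N=N$ and $\mathcal Q_N$ the primes in $(N/2,N]$. You then verify \ref{H1intro} in the three regimes $\sigma<1/2$, $\sigma=1/2$, $\sigma>1/2$, and \ref{H2intro} via the comparability constant $2^{\max_I|\sigma|}$ and the termwise bounds giving $B_N(\sigma)=\#\mathcal Q_N\, N^{-2\sigma}e^{O_I(1)}$ uniformly on $I$. Your remark that elementary Chebyshev-type bounds on $\pi(N)-\pi(N/2)$ suffice in place of the prime number theorem is valid, since only $\log\#\mathcal Q_N\sim\log N$ is needed.
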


\begin{theorem}[Dirichlet $L$-functions]\label{thm:dirichlet-intro}
Let $\chi$ be a fixed Dirichlet character modulo $q$, and put
\[
L_N(s,\chi)=\sum_{n\le N}\chi(n)n^{-s}.
\]
Let $J_{N,\chi}$ be its Jessen function and
\[
M_{N,\chi}=\max\{n\le N:(n,q)=1\}.
\]
Then
\begin{equation}\label{eq:dir-potential-intro}
\frac{J_{N,\chi}(\sigma)}{\log N}
\longrightarrow
\left(\frac12-\sigma\right)_+
\end{equation}
locally uniformly on $\R$, and
\begin{equation}\label{eq:dir-measure-intro}
\frac1{\log M_{N,\chi}}J_{N,\chi}''
\xrightarrow{\mathrm w}\delta_{1/2}.
\end{equation}
\end{theorem}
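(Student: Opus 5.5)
The plan is to deduce the statement from Theorem~\ref{thm:abstract-main} with $a_n=\chi(n)$, $\alpha=1/2$ and
\[
\mathcal Q_N=\{p \text{ prime}: N/2<p\le N,\ p\nmid q\}.
\]
The normalization $a_1=\chi(1)=1$ holds automatically. Since $q$ is fixed, the prime number theorem gives $\#\mathcal Q_N\sim N/(2\log N)\to\infty$; only the finitely many primes dividing $q$ are removed. The quantity $M_{N,\chi}$ is exactly $M_N$ of \eqref{eq:MN-intro}, because $\chi(n)\neq0$ if and only if $(n,q)=1$. Moreover $M_{N,\chi}\ge N-q$, so $\log M_{N,\chi}\sim\log N$. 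Once (H1) and (H2) are checked, the conclusions \eqref{eq:abstract-potential-intro} and \eqref{eq:abstract-measure-intro} are precisely \eqref{eq:dir-potential-intro} and \eqref{eq:dir-measure-intro}.

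\textbf{Hypothesis (H1).} Here $E_N(\sigma)=\sum_{n\le N,(n,q)=1}n^{-2\sigma}$. I would bound it above by $\sum_{n\le N}n^{-2\sigma}$ and below by the sum over $n\equiv1\pmod q$, which is a positive proportion $\gg_q$ of the full sum in each regime. The three regimes are as follows.
\begin{itemize}
\item For $\sigma<1/2$, elementary comparison with integrals gives $E_N(\sigma)\asymp_{q,\sigma}N^{1-2\sigma}$.
\item For $\sigma=1/2$, the sum is $\asymp\log N$.
\item For $\sigma>1/2$, the sum lies between $1$ and $\zeta(2\sigma)$.
\end{itemize}
Taking $\frac{1}{2\log N}\log$ then gives the limits $\tfrac12-\sigma$, $0$ and $0$ respectively, i.e. $(\tfrac12-\sigma)_+$.

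\textbf{Hypothesis (H2).} For $p\in\mathcal Q_N$ we have $|a_p|=1$. Hence for $\sigma$ in a compact $I\subset(-\infty,1/2)$,
\[
\frac{|a_p|p^{-\sigma}}{|a_{p'}|p'^{-\sigma}}=(p'/p)^{\sigma}\in\bigl[2^{-|\sigma|},2^{|\sigma|}\bigr],
\]
since $p,p'\in(N/2,N]$. So $K_I=2^{\max_I|\sigma|}$ works.

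For the energy, $B_N(\sigma)=\sum_{p\in\mathcal Q_N}p^{-2\sigma}$ lies between $\min\bigl(N^{-2\sigma},(N/2)^{-2\sigma}\bigr)\#\mathcal Q_N$ and the corresponding max times $\#\mathcal Q_N$. Therefore
\[
\log B_N(\sigma)=-2\sigma\log N+\log\#\mathcal Q_N+O(|\sigma|).
\]
Using $\log\#\mathcal Q_N=\log N-\log\log N+O(1)$, dividing by $2\log N$ gives $\tfrac12-\sigma+O\bigl((\log\log N+|\sigma|)/\log N\bigr)$. This convergence is uniform on compacts.

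\textbf{Main obstacle.} The only place needing care is the lower bound in (H1) for $\sigma<1/2$ and at $\sigma=1/2$, i.e. ensuring that coprimality to $q$ costs only a constant factor. Restricting to the progression $n\equiv1\pmod q$ and comparing with an integral handles this directly. Even a crude bound $E_N\ge\#\{n\le N: n\equiv1 \ (q)\}\cdot N^{-2\sigma}\gg N^{1-2\sigma}/q$ suffices for $\sigma\in[0,1/2)$. For $\sigma<0$, bounding the terms with $n\ge N/2$ below by $(N/2)^{-2\sigma}$ suffices. Since only logarithmic asymptotics are required, constants depending on $q$ and $\sigma$ are harmless. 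Everything else is a direct invocation of Theorem~\ref{thm:abstract-main}.
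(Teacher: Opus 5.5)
Your proposal is correct and follows the paper's proof. Both apply Theorem~\ref{thm:abstract-main} with $\alpha=1/2$ and the dyadic primes (which, for $N>2q$, coincide with your $\mathcal Q_N$), and both check (H2) exactly as in the zeta case. The only difference is in (H1): you verify it by order-of-magnitude bounds, using the progression $n\equiv1\pmod q$ for the lower bound, whereas the paper derives the exact main term $\frac{\varphi(q)}{q}\frac{N^{1-2\sigma}}{1-2\sigma}$ by inclusion--exclusion and Abel summation in Lemma~\ref{lem:dirichlet-energy}. Your cruder bounds suffice because only logarithmic asymptotics enter the criterion.
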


For the modular application, let $f$ be a fixed normalized primitive
holomorphic Hecke eigenform of even weight $k\ge2$, fixed level $Q$,
and trivial Dirichlet character; see, for example,
Iwaniec \cite[Sections~6.6--6.8]{IwaniecTopics} for the theory and
normalization of newforms.  Thus $f$ has a Fourier expansion at the
cusp infinity of the form
\begin{equation}\label{eq:modular-fourier-intro}
f(z)=\sum_{n\ge1}a_f(n)e^{2\pi inz},
\qquad a_f(1)=1,
\end{equation}
where $a_f(n)$ denotes the $n$th Fourier coefficient of $f$.

We further assume that $f$ has no complex multiplication.  This is the
hypothesis under which the Sato--Tate equidistribution theorem used
below applies; see Barnet-Lamb, Gee and Geraghty
\cite[Corollary~7.1.7]{BLGG2011}.

Define the normalized Hecke eigenvalues by
\begin{equation}\label{eq:lambda-normalization-intro}
\lambda_f(n):=a_f(n)n^{-(k-1)/2}.
\end{equation}
With this normalization, the associated $L$-function is centered at
$\Ree s=1/2$.

\begin{theorem}[Fixed-level holomorphic Hecke eigenforms]\label{thm:modular-intro}
With $f$ as above, define
\begin{equation}\label{eq:automorphic-partial-intro}
L_N(s,f)=\sum_{n\le N}\lambda_f(n)n^{-s}.
\end{equation}
Then the Jessen functions $J_{N,f}$ satisfy
\begin{equation}\label{eq:modular-potential-intro}
\frac{J_{N,f}(\sigma)}{\log N}
\longrightarrow
\left(\frac12-\sigma\right)_+
\end{equation}
locally uniformly on $\R$, and
\begin{equation}\label{eq:modular-measure-intro}
\frac1{\log M_{N,f}}J_{N,f}''
\xrightarrow{\mathrm w}\delta_{1/2},
\end{equation}
where $M_{N,f}:=\max\{n\le N:\lambda_f(n)\neq0\}$.

Equivalently, for the classical Fourier-coefficient truncations
\begin{equation}\label{eq:classical-modular-intro}
\mathcal L_N(s,f)=\sum_{n\le N}a_f(n)n^{-s},
\end{equation}
zero-density concentration occurs on the line
\begin{equation}\label{eq:kover2-intro}
\Ree s=\frac{k}{2}.
\end{equation}
\end{theorem}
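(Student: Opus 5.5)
The plan is to deduce Theorem~\ref{thm:modular-intro} from the abstract criterion, Theorem~\ref{thm:abstract-main}, with $\alpha=1/2$ and $a_n=\lambda_f(n)$. Afterwards the classical statement will follow by a shift of the variable.

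\emph{Hypothesis \ref{H1intro}.} We have $E_N(\sigma)=\sum_{n\le N}|\lambda_f(n)|^2n^{-2\sigma}$. The Rankin--Selberg asymptotic $\sum_{n\le x}|\lambda_f(n)|^2=c_fx+O(x^{3/5})$ with $c_f>0$, combined with partial summation, gives the following:
\begin{itemize}[label=$\cdot$]
\item for $\sigma<1/2$, $E_N(\sigma)\asymp N^{1-2\sigma}$;
\item for $\sigma=1/2$, $E_N(\sigma)\asymp\log N$;
\item for $\sigma>1/2$, $E_N(\sigma)$ is bounded, and it is at least $1$ because $a_1=1$.
\end{itemize}
Taking $\frac{1}{2\log N}\log$ of each case yields $(1/2-\sigma)_+$.

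\emph{Hypothesis \ref{H2intro}.} Take
\[
\mathcal Q_N=\{p\ \text{prime}: N/2<p\le N,\ p\nmid Q,\ |\lambda_f(p)|\ge\varepsilon_0\},
\]
where $\varepsilon_0\in(0,2)$ is fixed, say $\varepsilon_0=1$. We use the following facts.
\begin{itemize}[label=$\cdot$]
\item Deligne's bound gives $|\lambda_f(p)|\le2$.
\item Since $f$ has no CM, Sato--Tate \cite[Corollary~7.1.7]{BLGG2011} says that $\lambda_f(p)=2\cos\theta_p$ is equidistributed with respect to $\frac2\pi\sin^2\theta\,d\theta$.
\item Applied to primes in $(N/2,N]$ (equidistribution on the full range $p\le x$ at scales $x=N$ and $x=N/2$, then subtract), this gives $\#\mathcal Q_N\ge c\,\pi(N)\gg N/\log N$, where $c=\mu_{ST}(|2\cos\theta|\ge\varepsilon_0)>0$.
\end{itemize}
For $p,q\in\mathcal Q_N$ and $\sigma$ in a compact interval $I$, the ratio $|a_p|p^{-\sigma}/(|a_q|q^{-\sigma})$ lies between $(\varepsilon_0/2)2^{-|\sigma|}$ and $(2/\varepsilon_0)2^{|\sigma|}$, because $p/q\in(1/2,2)$. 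This gives $K_I$. For the energy, $B_N(\sigma)\asymp_I \#\mathcal Q_N\cdot N^{-2\sigma}$, which lies between $N^{1-2\sigma}/\log N$ and $N^{1-2\sigma}$ up to constants. Hence $\frac{1}{2\log N}\log B_N(\sigma)=\frac12-\sigma+O(\log\log N/\log N)$ uniformly on $I$, which is \eqref{eq:H2energy-intro}.

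Theorem~\ref{thm:abstract-main} then yields \eqref{eq:modular-potential-intro} and \eqref{eq:modular-measure-intro}. Here $M_{N,f}\to\infty$ because $\mathcal Q_N\neq\emptyset$, and indeed $\log M_{N,f}\sim\log N$.

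\emph{Classical normalization.} We have $\mathcal L_N(s,f)=\sum a_f(n)n^{-s}=L_N\bigl(s-\tfrac{k-1}{2},f\bigr)$. Therefore $J_{\mathcal L_N}(\sigma)=J_{N,f}(\sigma-\frac{k-1}2)$, and $J''_{\mathcal L_N}$ is the translate of $J_{N,f}''$ by $(k-1)/2$. The Dirac mass moves from $1/2$ to $k/2$, and $M_N$ is unchanged.

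\emph{Main obstacle.} The main obstacle is the Sato--Tate step. One must make sure that the cited equidistribution, which is stated for $p\le x$, gives a positive proportion of primes in the dyadic window $(N/2,N]$ with $|\lambda_f(p)|$ bounded below. The subtraction argument above handles this, since the Sato--Tate measure of $\{|2\cos\theta|\ge 1\}$ is positive and has negligible boundary. The remaining steps, namely Rankin--Selberg, Deligne's bound, and the sign conventions of the shift, are routine.
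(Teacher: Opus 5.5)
Your proposal is correct and follows the paper's proof essentially step for step. You verify \ref{H1intro} by Rankin--Selberg plus partial summation, and \ref{H2intro} by using Deligne's bound and Sato--Tate on the set $\{N/2<p\le N,\ p\nmid Q,\ |\lambda_f(p)|\ge 1\}$. You then invoke Theorem~\ref{thm:abstract-main} and pass to the classical normalization via the shift $\mathcal L_N(s,f)=L_N\bigl(s-\frac{k-1}{2},f\bigr)$, exactly as the paper does.

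One harmless slip: the dyadic subtraction actually gives $\#\mathcal Q_N\sim\frac{c}{2}\,\pi(N)$, not $\#\mathcal Q_N\ge c\,\pi(N)$. This does not matter, since only $\#\mathcal Q_N\asymp N/\log N$ is used.
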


\section{Bohr--Jessen framework and zero-density measures}\label{sec:bohr-jessen}

\subsection{Finite Bohr lifts}

Fix $N\ge2$, let $p_1<\cdots<p_d\le N$ be the primes up to $N$, $d=\pi(N)$ and write
\begin{equation}\label{eq:prime-factorization}
n=\prod_{j=1}^d p_j^{v_{p_j}(n)},
\qquad
\mathbf v(n):=(v_{p_1}(n),\ldots,v_{p_d}(n))\in\N_0^d.
\end{equation}
For $z=(z_1,\ldots,z_d)\in\T^d$, define the Bohr lift
\begin{equation}\label{eq:bohr-lift-general}
F_{P_N}(\sigma,z)
:=
\sum_{n\le N}a_n n^{-\sigma}z^{\mathbf v(n)},
\qquad
z^{\mathbf m}:=z_1^{m_1}\cdots z_d^{m_d}.
\end{equation}
Then
\begin{equation}\label{eq:vertical-restriction}
F_{P_N}\bigl(\sigma,(p_1^{-it},\ldots,p_d^{-it})\bigr)
=P_N(\sigma+it).
\end{equation}

The elementary arithmetic fact behind the full-torus model is worth isolating.

\begin{proposition}[Prime-frequency injectivity]\label{prop:prime-injectivity}
Let
\[
\omega_N=(\log p_1,\ldots,\log p_d).
\]
Then the resonance lattice
\[
\Lambda_N:=\{\mathbf k\in\Z^d:\langle\mathbf k,\omega_N\rangle=0\}
\]
is trivial.  Consequently the Kronecker orbit
\[
t\longmapsto(p_1^{-it},\ldots,p_d^{-it})
\]
is dense in $\T^d$, and the map $n\mapsto\mathbf v(n)$ is injective.
\end{proposition}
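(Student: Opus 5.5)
The plan is to reduce the triviality of $\Lambda_N$ to unique factorization in $\Z$, and then obtain the two consequences: density of the orbit from Kronecker's theorem, and injectivity of $\mathbf v$ from the same factorization argument.

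\emph{Triviality of $\Lambda_N$.} Suppose $\mathbf k\in\Z^d$ satisfies $\sum_j k_j\log p_j=0$. Exponentiating gives $\prod_j p_j^{k_j}=1$. Split the indices according to the sign of $k_j$. This yields
\[
\prod_{k_j>0}p_j^{k_j}=\prod_{k_j<0}p_j^{-k_j},
\]
an identity between two positive integers built from disjoint sets of primes. By the fundamental theorem of arithmetic every exponent must vanish, so $\mathbf k=0$.

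\emph{Density of the orbit.} Apply Kronecker's theorem in its continuous-flow form. If the frequencies $\omega_N$ are linearly independent over $\Q$, then the flow $t\mapsto(e^{-it\log p_1},\ldots,e^{-it\log p_d})$ is dense in $\T^d$. Since $\Lambda_N$ is trivial, rational linear independence follows immediately by clearing denominators. This flow is exactly $t\mapsto(p_1^{-it},\ldots,p_d^{-it})$. If one prefers to avoid quoting Kronecker, Weyl's criterion gives the same conclusion. For every nonzero $\mathbf k$, the time average of $e^{-it\langle\mathbf k,\omega_N\rangle}$ over $[-T,T]$ tends to $0$, because $\langle\mathbf k,\omega_N\rangle\neq0$. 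Hence the orbit is equidistributed, and in particular dense. The equidistribution form is also the one used later, when time averages of $\log|P_N|$ are identified with torus integrals.

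\emph{Injectivity of $\mathbf v$.} Every $n\le N$ has all of its prime factors among $p_1,\ldots,p_d$, so $n=\prod_j p_j^{v_{p_j}(n)}$. If $\mathbf v(n)=\mathbf v(m)$, the two products coincide and $n=m$. Equivalently, $\mathbf v(n)-\mathbf v(m)\in\Lambda_N$, because $\langle\mathbf v(n),\omega_N\rangle=\log n$, and this difference vanishes by the first step.

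There is no genuine obstacle here. The only step beyond elementary arithmetic is invoking Kronecker's theorem, or equivalently Weyl's criterion, for continuous flows.
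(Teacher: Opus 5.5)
Your proposal is correct and follows essentially the same route as the paper: unique factorization shows $\Lambda_N$ is trivial, the continuous Kronecker--Weyl theorem gives density (indeed equidistribution) of the orbit, and unique factorization again gives injectivity of $n\mapsto\mathbf v(n)$. Your optional Weyl-criterion sketch and the reformulation $\mathbf v(n)-\mathbf v(m)\in\Lambda_N$ are only elaborations of that same argument.
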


\begin{proof}
If $\mathbf k=(k_1,\ldots,k_d)\in\Lambda_N$, then
\[
0=\sum_{j=1}^dk_j\log p_j
=\log\left(\prod_{j=1}^dp_j^{k_j}\right).
\]
Hence $\prod_jp_j^{k_j}=1$.  Unique factorization in $\Z$ forces $k_j=0$ for every $j$, so $\Lambda_N=\{0\}$.  The continuous Kronecker--Weyl theorem then implies that the orbit is uniformly distributed, and in particular dense, in the full torus; see Haviland and Wintner \cite{HavilandWintner1934}.  Jessen and Tornehave use this finite-integral-base torus representation in their spatial-extension construction \cite[\S116]{JessenTornehave1945}.  Finally, $\mathbf v(m)=\mathbf v(n)$ implies $m=n$ by unique factorization.
\end{proof}

\subsection{The exact Bohr--Jessen identity}

At a zero of $F_{P_N}(\sigma,\cdot)$ one has $\log|F_{P_N}|=-\infty$.  Thus the logarithm is neither continuous nor bounded on the torus, so the continuous Kronecker--Weyl theorem invoked above cannot be applied to it directly.  We first record the required logarithmic integrability.

\begin{lemma}\label{lem:log-integrable}
If $Q$ is a nonzero complex polynomial in $d$ variables, then
\[
\log|Q|\in L^1(\T^d).
\]
\end{lemma}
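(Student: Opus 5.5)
The plan is induction on the number of variables $d$, with Jensen's formula for one-variable polynomials as the base case and Fubini as the inductive mechanism. The upper half of integrability is immediate: $Q$ is continuous on the compact torus, so $\log|Q|\le\log\max_{\T^d}|Q|$ is bounded above. It therefore suffices to prove
\[
\int_{\T^d}\log|Q|\dd m_d>-\infty,
\]
where $m_d$ is normalized Haar measure. Since $\log|Q|$ is measurable with values in $[-\infty,\infty)$ and bounded above, this negative-part bound is exactly what $L^1$ requires.

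\emph{Base case $d=1$.} Write $Q(z)=c\,z^m\prod_j(z-\zeta_j)$ with $c\neq0$ and $\zeta_j\ne0$. Then $\log|Q(e^{i\theta})|=\log|c|+\sum_j\log|e^{i\theta}-\zeta_j|$ on the circle, since $|z^m|=1$ there. Each term $\log|e^{i\theta}-\zeta|$ is integrable, because near a root on the circle it behaves like $\log|\theta-\theta_0|$. In fact, Jensen's formula gives the exact value
\[
\int_\T\log|z-\zeta|\dd m_1=\log\max(1,|\zeta|),
\]
so $\int_\T\log|Q|\dd m_1=\log|c|+\sum_j\log^+|\zeta_j|$. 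This is the Mahler measure identity, and it yields the key one-variable bound $\int_\T\log|Q|\dd m_1\ge\log|c|$, where $c$ is the leading coefficient.

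\emph{Inductive step.} Write
\[
Q(z',z_d)=\sum_{k=0}^{m}c_k(z')z_d^k,
\]
where $z'=(z_1,\dots,z_{d-1})$ and $c_m$ is a nonzero polynomial in $d-1$ variables. For every $z'$ with $c_m(z')\ne0$, the base case bound gives
\[
\int_\T\log|Q(z',z_d)|\dd m_1(z_d)\ge\log|c_m(z')|.
\]
By the induction hypothesis $\log|c_m|\in L^1(\T^{d-1})$. In particular the set $\{c_m=0\}$ has measure zero, and the lower bound is integrable in $z'$.

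\emph{Main obstacle.} Combining the inner bound with Fubini/Tonelli requires some care with sign. I will apply Tonelli to the nonnegative function
\[
\log\max_{\T^d}|Q|-\log|Q|\ge0.
\]
Its iterated integral is at most
\[
\log\max_{\T^d}|Q|-\int_{\T^{d-1}}\log|c_m|\dd m_{d-1}<\infty.
\]
Hence $\log|Q|$ has finite integral on $\T^d$, which closes the induction.

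An alternative route would be to note that $\log|Q|$ is plurisubharmonic and not identically $-\infty$, hence locally integrable. The induction above is elementary, and it additionally yields the lower bound $\int_{\T^d}\log|Q|\dd m_d\ge\int_{\T^{d-1}}\log|c_m|\dd m_{d-1}$.
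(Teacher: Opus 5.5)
Your proof is correct and follows essentially the same route as the paper: induction on $d$, with the one-variable Jensen/Mahler bound $\int_\T\log|Q(z',z_d)|\dd m(z_d)\ge\log|c_m(z')|$ holding off the null set $\{c_m=0\}$, followed by Tonelli. The only cosmetic difference is that you apply Tonelli once to the nonnegative function $\log\max_{\T^d}|Q|-\log|Q|$, whereas the paper applies it separately to $\log^+|Q|$ and $\log^-|Q|$.
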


\begin{proof}
This is the standard finiteness statement underlying logarithmic Mahler measure; compare Boyd \cite[Section~1]{Boyd1998}.  We include the argument because the integrability near the zero set will be used below.  Multiplication by a monomial does not change the modulus on $\T^d$, so it is enough to consider an ordinary polynomial.

For $d=1$, write
\[
Q(z)=c\prod_{j=1}^r(z-\alpha_j).
\]
Since $Q$ is continuous on $\T$, the positive part $\log^+|Q|$ is bounded.  It remains to check each factor in the negative part.\\
 If $|\alpha|\neq1$, then
\[
\inf_{\theta\in[0,2\pi]}|e^{i\theta}-\alpha|>0,
\]
so $\log|e^{i\theta}-\alpha|$ is bounded.\\
If $|\alpha|=1$, write $\alpha=e^{i\theta_0}$.  Then
\begin{equation}\label{eq:circle-root-distance}
|e^{i\theta}-e^{i\theta_0}|
=2\left|\sin\frac{\theta-\theta_0}{2}\right|.
\end{equation}
For $|\theta-\theta_0|\le1$, the elementary bounds $c_1|u|\le|\sin u|\le|u|$ give
\[
c_2|\theta-\theta_0|
\le |e^{i\theta}-e^{i\theta_0}|
\le |\theta-\theta_0|.
\]
Consequently the only possible singularity is logarithmic, and
\begin{equation}\label{eq:log-integral-elementary}
\int_0^\delta |\log u|\dd u
=\delta(1-\log\delta)<\infty
\qquad(0<\delta<1).
\end{equation}
Hence $\log|Q|\in L^1(\T)$.

Assume now that the assertion holds in $d-1$ variables.  Regard $Q$
as a polynomial in the last variable $z_d$, with coefficients depending
polynomially on the remaining variables $z'=(z_1,\ldots,z_{d-1})$:
\[
Q(z',z_d)=\sum_{j=0}^m Q_j(z')z_d^j,
\qquad z'\in\T^{d-1},
\]
where $m=\deg_{z_d}Q$, each $Q_j\in\C[z_1,\ldots,z_{d-1}]$, and
$Q_m\not\equiv0$.  By the induction hypothesis, $\log|Q_m|\in L^1(\T^{d-1})$; in particular the zero set of $Q_m$ has Haar measure zero.  For every $z'$ outside that null set, factor
\[
Q(z',z_d)=Q_m(z')\prod_{j=1}^m(z_d-\alpha_j(z')).
\]
Jensen's one-variable formula, in the form recorded for example in Conway \cite[Chapter~XI, Section~1]{Conway1978}, gives
\begin{align}
L(z')
&:=\int_{\T}\log|Q(z',z_d)|\dd m(z_d)\notag\\
&=\log|Q_m(z')|+\sum_{j=1}^m\log^+|\alpha_j(z')|
\ge \log|Q_m(z')|.
\label{eq:Mahler-fiber-Jensen}
\end{align}
On the other hand, if $M:=\max_{\T^d}|Q|$, then
\[
P(z'):=\int_{\T}\log^+|Q(z',z_d)|\dd m(z_d)
\le \log^+M.
\]
Since $L=P-N$, where
\[
N(z'):=\int_{\T}\log^-|Q(z',z_d)|\dd m(z_d),
\]
we obtain from \eqref{eq:Mahler-fiber-Jensen}
\[
0\le N(z')
=P(z')-L(z')
\le \log^+M+\log^-|Q_m(z')|.
\]
The right-hand side is integrable on $\T^{d-1}$ by the induction
hypothesis.  Since $\log^+|Q|$ is nonnegative, Tonelli's theorem gives
\[
\begin{aligned}
\int_{\T^d}\log^+|Q(z)|\dd m_d(z)
&=
\int_{\T^{d-1}}
\left(
\int_{\T}\log^+|Q(z',z_d)|\dd m(z_d)
\right)
\dd m_{d-1}(z')\\
&=
\int_{\T^{d-1}}P(z')\dd m_{d-1}(z')\\
&\le \log^+M<\infty.
\end{aligned}
\]
Similarly, Tonelli's theorem applied to $\log^-|Q|$ yields
\[
\begin{aligned}
\int_{\T^d}\log^-|Q(z)|\dd m_d(z)
&=
\int_{\T^{d-1}}N(z')\dd m_{d-1}(z')\\
&\le
\log^+M+
\int_{\T^{d-1}}\log^-|Q_m(z')|
\dd m_{d-1}(z')\\
&<\infty,
\end{aligned}
\]
where the last inequality follows from the induction hypothesis.
Therefore
\[
\begin{aligned}
\int_{\T^d}|\log|Q(z)||\dd m_d(z)
&=
\int_{\T^d}\log^+|Q(z)|\dd m_d(z)
+
\int_{\T^d}\log^-|Q(z)|\dd m_d(z)\\
&<\infty.
\end{aligned}
\]
This completes the induction.
\end{proof}

\begin{proposition}[Bohr--Jessen identity]\label{prop:bohr-jessen}
For every fixed $N$ and every $\sigma\in\R$, the limit
\begin{equation}\label{eq:Jessen-general}
J_{P_N}(\sigma)
=
\lim_{T\to\infty}\frac1{2T}
\int_{-T}^{T}\log|P_N(\sigma+it)|\dd t
\end{equation}
exists and
\begin{equation}\label{eq:bohr-jessen-identity}
J_{P_N}(\sigma)
=
\int_{\T^d}\log|F_{P_N}(\sigma,z)|\dd m_d(z),
\end{equation}
where $m_d$ is normalized Haar measure on $\T^d$.  Moreover $J_{P_N}$ is finite and convex on $\R$.
\end{proposition}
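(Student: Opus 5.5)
The plan is to pass from the vertical time average to the torus integral by a truncation argument, and then to read off finiteness and convexity from the torus representation.

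\textbf{Step 1: the bounded truncation.} Fix $N$ and $\sigma$, and write $Q(z):=F_{P_N}(\sigma,z)$. This is a nonzero polynomial on $\T^d$: its monomials $z^{\mathbf v(n)}$ are distinct by Proposition~\ref{prop:prime-injectivity}, and the coefficient of $z^{\mathbf 0}$ is $a_1\neq0$. By Lemma~\ref{lem:log-integrable}, $\log|Q|\in L^1(\T^d)$, so the right-hand side of \eqref{eq:bohr-jessen-identity} is finite. For $\varepsilon>0$ set $\ell_\varepsilon:=\max(\log|Q|,\log\varepsilon)$, which is continuous on $\T^d$. By Proposition~\ref{prop:prime-injectivity}, the Kronecker flow $\gamma(t)=(p_j^{-it})_j$ is uniformly distributed. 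Together with \eqref{eq:vertical-restriction}, this gives
\[
\frac1{2T}\int_{-T}^T\max\bigl(\log|P_N(\sigma+it)|,\log\varepsilon\bigr)\dd t\longrightarrow\int_{\T^d}\ell_\varepsilon\dd m_d .
\]
Monotone convergence as $\varepsilon\downarrow0$ sends the right side to $\int\log|Q|\dd m_d$. Since $\log|P_N|\le\ell_\varepsilon$, this already yields the upper bound $\limsup_T\frac1{2T}\int_{-T}^T\log|P_N|\le\int\log|Q|$.

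\textbf{Step 2: the lower bound (main obstacle).} I must show that
\[
\limsup_{T\to\infty}\frac1{2T}\int_{-T}^T\log^-\!\bigl(|P_N(\sigma+it)|/\varepsilon\bigr)\dd t
\]
tends to $0$ as $\varepsilon\to0$. Uniform distribution says nothing about unbounded functions, so I would use the one-variable structure of the exponential polynomial $g(t)=P_N(\sigma+it)=\sum_n c_n e^{-it\log n}$.

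My first attempt is a Turán/Nazarov-type lemma on unit intervals. For an exponential sum with at most $N$ frequencies in $[0,\log N]$, on any interval $I$ of length $1$,
\[
\bigl|\{t\in I:|g(t)|<\varepsilon\sup_I|g|\}\bigr|\le C_N\,\varepsilon^{1/(N-1)} .
\]
I also need $\sup_I|g|\ge c>0$ uniformly in $I$. Here I would use Bohr almost periodicity: $g$ is not identically zero, and $\int_I|g|^2\ge c$ uniformly follows from the almost-orthogonality of the frequencies once intervals are taken of fixed large length $L$ rather than length $1$. These two facts give a uniform bound of the form $|\{t\in I:|g|<\lambda\}|\le C\lambda^{\kappa}$. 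Integrating the layer-cake formula then bounds $\frac1{|I|}\int_I\log^-(|g|/\varepsilon)$ by $C'\varepsilon^{\kappa}\log(1/\varepsilon)$, uniformly in $I$, and this tends to $0$.

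If this route fails, the fallback is Jessen--Tornehave \cite[Theorem~5]{JessenTornehave1945}, which gives existence of the limit; Step 1 then identifies the value, once the lower bound is available from their uniform integrability.

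\textbf{Step 3: finiteness and convexity.} Finiteness follows from Lemma~\ref{lem:log-integrable}. For convexity, fix $z\in\T^d$ and put $w_j=z_j e^{i\theta_j}$. The function $s\mapsto F_{P_N}(s,z)=\sum a_n n^{-s}z^{\mathbf v(n)}$ is entire, so $\log|F_{P_N}(\sigma+it,z)|$ is subharmonic in $s$. Averaging over $z$ (equivalently over $t$, by the translation invariance $z\mapsto z\cdot\gamma(t)$ of $m_d$) produces a function of $\sigma$ alone. A subharmonic function depending only on $\sigma$ is convex, and this is justified in the distributional sense via Fubini using the $L^1$ bounds. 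Alternatively, convexity can be cited directly from \cite[Theorem~7]{JessenTornehave1945}.
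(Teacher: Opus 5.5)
Your proposal is correct. Step~1 coincides with the paper's argument: truncate at level $\varepsilon$, apply Kronecker--Weyl to the continuous function $\log\max\{|F_{P_N}|,\varepsilon\}$, then let $\varepsilon\downarrow0$ on the torus. You use monotone convergence where the paper uses dominated convergence, which makes no difference.

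The real difference is how the limits $T\to\infty$ and $\varepsilon\downarrow0$ are interchanged on the vertical-line side. The paper does not prove this interchange. It imports it, together with existence of the mean, from Jessen--Tornehave's Theorem~5, so your fallback is exactly the paper's proof. The paper likewise cites their Theorem~7 for convexity.

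Your primary route proves the interchange directly, and it works. For $g(t)=\sum_n a_nn^{-\sigma}e^{-it\log n}$ with $n_0$ nonzero terms and real frequencies, Nazarov's form of Turán's lemma gives
\[
|\{t\in I:|g(t)|<\lambda\sup_I|g|\}|\le A|I|\lambda^{1/(n_0-1)},
\]
with $A$ independent of $I$ and of the frequencies. Almost orthogonality gives $\frac1L\int_{t_0}^{t_0+L}|g|^2\ge\frac12\sum_n|a_n|^2n^{-2\sigma}>0$ for fixed large $L$, uniformly in $t_0$. The layer-cake formula then bounds $\frac1{|I|}\int_I\log^+(\varepsilon/|g|)$ by $C\varepsilon^{\kappa}/\kappa$ (your extra $\log(1/\varepsilon)$ is unnecessary), uniformly over intervals of length $L$. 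Covering $[-T,T]$ by such intervals yields the matching $\liminf$, hence both existence of the limit and the identity.

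What your route buys is a self-contained, quantitative argument tailored to finite exponential sums, with a rate uniform in $T$. The cost is importing the Turán--Nazarov inequality, whose exponent degrades with the number of terms; this is harmless for fixed $N$. The paper's citation is shorter and rests on the general theory of analytic almost periodic functions.

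Similarly, your subharmonicity argument for convexity is a direct proof of what the paper cites from Theorem~7. The $m_d$-average of $\log|F_{P_N}(s,z)|$ is subharmonic in $s$, and by invariance of $m_d$ under $z\mapsto z\cdot\gamma(t)$ it depends only on $\Ree s$. The stray notation $w_j=z_je^{i\theta_j}$ plays no role and can be dropped.
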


\begin{proof}
The frequencies $-\log n$ are integral combinations of the finite basis $-\log p_1,\ldots,-\log p_d$.  Jessen and Tornehave \cite[Theorem~5]{JessenTornehave1945} show that, for a nonzero analytic almost periodic function, the logarithmic mean exists and is obtained by first truncating the logarithm from below and then letting the truncation level tend to zero.

Fix $\varepsilon>0$ and set
\[
g_{\sigma,\varepsilon}(z)
:=\log\max\{|F_{P_N}(\sigma,z)|,\varepsilon\},
\qquad z\in\T^d.
\]
This is continuous on $\T^d$.  Define
\[
\Phi_N:\R\longrightarrow\T^d,
\qquad
\Phi_N(t):=(p_1^{-it},\ldots,p_d^{-it}).
\]
By \eqref{eq:vertical-restriction},
\[
g_{\sigma,\varepsilon}(\Phi_N(t))
=
\log\max\{|P_N(\sigma+it)|,\varepsilon\}.
\]
Proposition~\ref{prop:prime-injectivity} and the Kronecker--Weyl theorem therefore give
\begin{align}
&\lim_{T\to\infty}\frac1{2T}
\int_{-T}^{T}
\log\max\{|P_N(\sigma+it)|,\varepsilon\}\dd t\notag\\
&\quad=
\lim_{T\to\infty}\frac1{2T}
\int_{-T}^{T}g_{\sigma,\varepsilon}(\Phi_N(t))\dd t\notag\\
&\quad=
\int_{\T^d}g_{\sigma,\varepsilon}(z)\dd m_d(z)\notag\\
&\quad=
\int_{\T^d}
\log\max\{|F_{P_N}(\sigma,z)|,\varepsilon\}\dd m_d(z).
\label{eq:truncated-mean}
\end{align}
This is the finite-integral-base spatial extension used by Jessen and Tornehave \cite[\S116]{JessenTornehave1945}.

For $0<\varepsilon\le1$,
\[
\left|\log\max\{|F_{P_N}|,\varepsilon\}\right|
\le
\log^+\|F_{P_N}\|_\infty+\log^-|F_{P_N}|.
\]
The right-hand side is integrable by Lemma~\ref{lem:log-integrable}.  Hence dominated convergence gives
\[
\lim_{\varepsilon\downarrow0}
\int_{\T^d}
\log\max\{|F_{P_N}(\sigma,z)|,\varepsilon\}\dd m_d(z)
=
\int_{\T^d}\log|F_{P_N}(\sigma,z)|\dd m_d(z).
\]
Combining this with \eqref{eq:truncated-mean} and \cite[Theorem~5]{JessenTornehave1945} proves \eqref{eq:bohr-jessen-identity}. Finiteness follows from Lemma~\ref{lem:log-integrable}, while
convexity follows from Jessen and Tornehave \cite[Theorem~7]{JessenTornehave1945}.
\end{proof}

\subsection{Jessen--Tornehave zero density}

For a convex function $J$, write $J'(x-)$ and $J'(x+)$ for its left and right derivatives.

\begin{proposition}[Vertical zero density]\label{prop:JT-density}
Let $Z(P_N)$ denote the multiset of zeros $\rho=\beta+i\gamma$ of $P_N$.  If $a<b$, then the vertical zero frequency in the strip $a<\Ree s<b$ exists and
\begin{equation}\label{eq:JT-density}
\lim_{T\to\infty}\frac1{2T}
\#\{\rho\in Z(P_N):a<\beta<b,\ |\gamma|<T\}
=
\frac1{2\pi}
\bigl(J_{P_N}'(b-)-J_{P_N}'(a+)\bigr).
\end{equation}
  Equivalently,
\begin{equation}\label{eq:Riesz-measure}
\rho_{P_N}:=\frac1{2\pi}J_{P_N}''
\end{equation}
is the locally finite vertical zero-density measure on the real axis.
\end{proposition}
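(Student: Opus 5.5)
The plan is to derive Proposition~\ref{prop:JT-density} from the Jensen formula for almost periodic functions in a strip, taking Proposition~\ref{prop:bohr-jessen} as the input that the mean $J_{P_N}$ exists and is convex on $\R$.

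First I fix $a<b$ and apply Jensen's formula (equivalently, the argument principle for $\log P_N$) to the rectangle $R_T=\{a<\sigma<b,\ |t|<T\}$. Since $P_N$ is a nonconstant exponential polynomial with $a_1=1$, I choose $a,b$ first so that no zeros lie on $\Ree s=a$ or $\Ree s=b$; the zeros lying on such a line have zero vertical frequency, so this costs nothing at the end. I then choose heights $T$ avoiding zeros. Green's formula for the subharmonic function $\log|P_N|$ gives
\[
2\pi\,\#\{\rho\in R_T\}\approx\int_{-T}^{T}\Bigl(\partial_\sigma\log|P_N(b+it)|-\partial_\sigma\log|P_N(a+it)|\Bigr)\dd t
\]
plus horizontal boundary terms. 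These horizontal terms are $\int_a^b\partial_t\arg P_N(\sigma\pm iT)\,\dd\sigma$, i.e.\ variations of the argument along horizontal segments. Because $|P_N|$ is bounded above and $P_N$ is a polynomial in finitely many exponentials, the standard lemma (Jessen--Tornehave, or Titchmarsh's bound via Jensen's formula on a disc) gives that this argument variation is $O(1)$ uniformly for suitably chosen $T$. Dividing by $2T$ therefore kills the horizontal terms.

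The main obstacle is identifying the vertical terms with derivatives of $J_{P_N}$, since I cannot simply differentiate inside the limit. My route is to integrate in $\sigma$ first. For $a<a'<b'<b$, I integrate the counting identity over the outer abscissa $\sigma\in(b',b)$ and the inner abscissa $\sigma\in(a,a')$; this converts the vertical derivative terms into differences of the means $\frac1{2T}\int\log|P_N(\sigma+it)|\dd t$ at the endpoints. Each of these means converges to $J_{P_N}$ by Proposition~\ref{prop:bohr-jessen}. The outcome is that the averaged counting functions converge to difference quotients:
\[
\frac{J(b)-J(b')}{b-b'}-\frac{J(a')-J(a)}{a'-a}.
\]
By monotonicity of the counting function in the strip, upper and lower densities are then sandwiched between such difference quotients. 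Letting $b'\uparrow b$ and $a'\downarrow a$, convexity turns the quotients into $J'(b-)$ and $J'(a+)$.

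Finally, existence of the limit (and not merely bounds on $\liminf$ and $\limsup$) follows from the sandwich. The identification of $\frac1{2\pi}J''$ as the vertical zero-density measure is then immediate, since $J''$ is the Lebesgue--Stieltjes measure of the increasing right derivative. All of this is also precisely \cite[Theorem~31]{JessenTornehave1945}, which I would cite as the fallback.
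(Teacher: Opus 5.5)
Your integrated-Jensen argument is sound as far as it goes, but the sandwich does not close for every pair $a<b$. The remark you use to dispose of the bad cases is also false.

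Write $J=J_{P_N}$, and let $n(x,y;T)$ count zeros with $x<\Ree\rho<y$ and $|\Ims\rho|<T$. For $x\in(a,a')$ and $y\in(b',b)$ one has $n(a',b';T)\le n(x,y;T)\le n(a,b;T)$. So your averaged identity bounds the count for $(a,b)$ only from below. An upper bound for $(a,b)$ must come from averaging over abscissae outside $[a,b]$. Letting the auxiliary abscissae tend to $a$ and $b$, what you actually obtain is
\[
\frac{J'(b-)-J'(a+)}{2\pi}
\le\liminf_{T\to\infty}\frac{n(a,b;T)}{2T}
\le\limsup_{T\to\infty}\frac{n(a,b;T)}{2T}
\le\frac{J'(b+)-J'(a-)}{2\pi}.
\]
This yields existence and the formula only when $J''$ has no atom at $a$ or $b$. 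Atoms do occur. For $\zeta_2(s)=1+2^{-s}$, every zero lies on $\Ree s=0$ with vertical frequency $\log2/(2\pi)$, and $J'$ jumps from $-\log2$ to $0$ there (compare Remark~\ref{rem:normalization-check}). For the strip $(0,1)$ your bounds are $0$ and $\log2/(2\pi)$. So it is not true that zeros on a single line have zero vertical frequency. Restricting to $a,b$ off the zero lines does not reduce the statement, which is asserted for every $a<b$, to the generic case.

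The missing ingredient is a proof that zeros strictly inside the strip cannot pile up against a boundary line with positive frequency:
\[
\lim_{\varepsilon\downarrow0}\limsup_{T\to\infty}\frac1{2T}\#\{\rho:\ b-\varepsilon<\Ree\rho<b,\ |\Ims\rho|<T\}=0,
\]
together with the analogue at $a$. Equivalently, the zeros lying exactly on $\Ree s=b$ must have frequency $\frac1{2\pi}\bigl(J'(b+)-J'(b-)\bigr)$.

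Your inputs are the integrated Jensen identity, the $O(1)$ horizontal terms, and pointwise convergence of the finite-$T$ means to $J$. Together these only give vague convergence of $\frac1{2T}\sum_{|\Ims\rho|<T}\delta_{\Ree\rho}$ to $\frac1{2\pi}J''$. Vague convergence cannot distinguish mass sitting on the line $\Ree s=b$ from mass approaching it from one side. Nothing you use excludes a positive-frequency family of zeros whose real parts increase to $b$. Such a family would give every strip $(b-\varepsilon,b)$ positive frequency while feeding only the atom of $J''$ at $b$.

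Ruling this out requires the almost periodic structure of $P_N$, not just convexity, and it is exactly the extra information in \cite[Theorem~31]{JessenTornehave1945}. Your fallback citation of that theorem is precisely the paper's proof: the citation plus the identification of their Jensen function $\varphi$ with $J_{P_N}$. Your self-contained argument, as written, proves the formula only for $a,b$ outside the countable set of atoms of $J''$.
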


\begin{proof}
	This is Jessen and Tornehave
	\cite[Theorem~31]{JessenTornehave1945}, applied to the ordinary
	Dirichlet series $P_N$.  Their Jensen function is
	\[
	\varphi(\sigma)
	:=
	\lim_{T\to\infty}\frac1{2T}
	\int_{-T}^{T}\log|P_N(\sigma+it)|\dd t,
	\]
	so in our notation $\varphi=J_{P_N}$.  Theorem~31 gives, for every
	$a<b$,
	\[
	H(a,b)
	=
	\frac1{2\pi}
	\bigl(\varphi'(b-0)-\varphi'(a+0)\bigr),
	\]
	where $H(a,b)$ denotes the relative frequency of zeros in the strip
	$a<\Ree s<b$.  This is exactly \eqref{eq:JT-density}.
\end{proof}

\begin{remark}\label{rem:normalization-check}
For $f(s)=1+e^{-\lambda s}$,
\[
J_f(\sigma)=\max\{0,-\lambda\sigma\},
\qquad
J_f''=\lambda\delta_0.
\]
The zeros are $s=(2m+1)\pi i/\lambda$, whose vertical density in $|\Ims s|<T$ is $\lambda/(2\pi)$.  Thus \eqref{eq:Riesz-measure} has the correct constant.
\end{remark}

\subsection{Elementary bounds and total mass}

\begin{proposition}\label{prop:basic-bounds}
Assume $a_1=1$.  Then for every $\sigma\in\R$,
\begin{equation}\label{eq:basic-bounds}
0\le J_{P_N}(\sigma)
\le
\frac12\log\sum_{n\le N}|a_n|^2n^{-2\sigma}.
\end{equation}
\end{proposition}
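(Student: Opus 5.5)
We work throughout on the torus, using the Bohr--Jessen identity of Proposition~\ref{prop:bohr-jessen}: for fixed $N$ and $\sigma$,
\[
J_{P_N}(\sigma)=\int_{\T^d}\log|F_{P_N}(\sigma,z)|\dd m_d(z),
\]
where $F_{P_N}(\sigma,\cdot)$ is a polynomial in $z_1,\ldots,z_d$. The two inequalities come from two different facts about such integrals: the lower bound from the Mahler-measure inequality, and the upper bound from Jensen's inequality for the concave logarithm.

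\emph{Lower bound.} The monomials $z^{\mathbf v(n)}$ in $F_{P_N}(\sigma,z)$ are distinct, by the injectivity in Proposition~\ref{prop:prime-injectivity}. The monomial $z^{\mathbf 0}$ comes from $n=1$ and carries the coefficient $a_1=1$.

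We show by induction on $d$ that, for any polynomial $Q$ with constant term $c_{\mathbf 0}$,
\[
\int_{\T^d}\log|Q|\dd m_d\ge\log|c_{\mathbf 0}|.
\]
For $d=1$, Jensen's formula as in \eqref{eq:Mahler-fiber-Jensen} gives
\[
\int_\T\log|Q|\dd m=\log|c_m|+\sum_j\log^+|\alpha_j|.
\]
Writing $Q(z)=z^r\tilde Q(z)$ with $\tilde Q(0)\neq0$, this also equals $\log|\tilde Q(0)|+\sum_j\log^+(1/|\beta_j|)\ge\log|\tilde Q(0)|$, where the $\beta_j$ are the roots of $\tilde Q$. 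In our case $\tilde Q(0)$ is the constant term, since $r=0$ whenever the constant term is nonzero; this is the classical sub-mean-value property of $\log|Q|$, namely $\log|Q(0)|\le\int\log|Q|$.

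For $d>1$, we write $Q(z',z_d)=\sum_jQ_j(z')z_d^j$. For almost every $z'$ we apply the one-variable bound in $z_d$, which gives $\int_\T\log|Q(z',\cdot)|\dd m\ge\log|Q_0(z')|$. Integrating in $z'$ with Fubini, which is justified by Lemma~\ref{lem:log-integrable}, and then applying the induction hypothesis to $Q_0$, whose constant term is $c_{\mathbf 0}$, gives the claim. We could equally cite subharmonicity directly: $\log|Q(rz)|$ averaged over $\T^d$ is nondecreasing in $r$ and tends to $\log|Q(0)|$ as $r\to0$.

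Applied with $c_{\mathbf 0}=a_1=1$, this yields $J_{P_N}(\sigma)\ge0$.

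\emph{Upper bound.} Since $m_d$ is a probability measure and $\log$ is concave, Jensen's inequality gives
\[
J_{P_N}(\sigma)=\frac12\int\log|F|^2\dd m_d\le\frac12\log\int_{\T^d}|F_{P_N}(\sigma,z)|^2\dd m_d(z).
\]
By Parseval, and again because the monomials are distinct by Proposition~\ref{prop:prime-injectivity}, the $L^2$ norm equals
\[
\int_{\T^d}|F_{P_N}(\sigma,z)|^2\dd m_d(z)=\sum_{n\le N}|a_n|^2n^{-2\sigma},
\]
which is the upper bound in \eqref{eq:basic-bounds}.

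The only delicate point is the lower bound in several variables, where the fiberwise Jensen argument needs integrability to justify Fubini. Lemma~\ref{lem:log-integrable} supplies this. The null set where $Q_0$ vanishes is harmless: there the bound reads $-\infty$, which holds trivially, and the set has measure zero.
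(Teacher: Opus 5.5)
Your proposal is correct and follows the paper's proof. The upper bound uses Jensen's inequality together with orthogonality of the distinct characters $z^{\mathbf v(n)}$, and the lower bound iterates the one-variable constant-coefficient form of Jensen's formula, with Fubini justified by Lemma~\ref{lem:log-integrable}; your induction on $d$ is just a more explicit write-up of the paper's ``apply successively to $z_d,\ldots,z_1$'' step.
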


\begin{proof}
For the upper bound, use Jensen's inequality for the concave function $x\mapsto\log x$: if $\mu$ is a probability measure and $X>0$ is integrable, then
\[
\int\log X\dd\mu\le\log\int X\dd\mu.
\]
Applying this to $X=|F_{P_N}(\sigma,z)|^2$ and normalized Haar measure gives
\begin{align*}
J_{P_N}(\sigma)
&=\frac12\int_{\T^d}\log|F_{P_N}(\sigma,z)|^2\dd m_d(z)\\
&\le\frac12\log\int_{\T^d}|F_{P_N}(\sigma,z)|^2\dd m_d(z).
\end{align*}
Distinct integers define distinct torus characters by Proposition~\ref{prop:prime-injectivity}.  Their orthogonality therefore yields
\[
\int_{\T^d}|F_{P_N}(\sigma,z)|^2\dd m_d(z)
=
\sum_{n\le N}|a_n|^2n^{-2\sigma}.
\]

For the lower bound, use Jensen's formula in one complex variable; see Conway \cite[Chapter~XI, Section~1]{Conway1978}.  If
\[
Q(z)=c_0+c_1z+\cdots+c_mz^m
=c_m\prod_{j=1}^m(z-\alpha_j),
\]
then
\[
\int_{\T}\log|Q(z)|\dd m(z)
=
\log|c_m|+\sum_{j=1}^m\log^+|\alpha_j|
\ge \log|c_0|,
\]
where the last inequality is the equivalent form obtained from
$|c_0|=|c_m|\prod_j|\alpha_j|$.  Apply this successively to the variables $z_d,z_{d-1},\ldots,z_1$.  Fubini is legitimate by Lemma~\ref{lem:log-integrable}.  At each step the circle mean is bounded below by the logarithm of the modulus of the constant coefficient in that variable, and after all variables have been integrated only the constant monomial remains.  Hence
\[
J_{P_N}(\sigma)
=
\int_{\T^d}\log|F_{P_N}(\sigma,z)|\dd m_d(z)
\ge\log|a_1|=0.
\]
\end{proof}

\begin{proposition}\label{prop:total-mass}
Assume $a_1\neq0$ and let $M_N$ be defined by \eqref{eq:MN-intro}.  Then
\begin{equation}\label{eq:totalmass}
J_{P_N}''(\R)=\log M_N,
\qquad
\rho_{P_N}(\R)=\frac{\log M_N}{2\pi}.
\end{equation}
Consequently $J_{P_N}''/\log M_N$ is a probability measure whenever $M_N>1$.
\end{proposition}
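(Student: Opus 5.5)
The total mass of the positive measure $J_{P_N}''$ equals $J_{P_N}'(+\infty)-J_{P_N}'(-\infty)$, so the plan is to find the asymptotic slopes of the convex function $J_{P_N}$ at both ends. The goal is to show that $J_{P_N}(\sigma)=\log|a_1|$ for all sufficiently large $\sigma$, and that $J_{P_N}(\sigma)=\log|a_{M_N}|-\sigma\log M_N$ for all sufficiently negative $\sigma$. Then $J_{P_N}'=0$ near $+\infty$ and $J_{P_N}'=-\log M_N$ near $-\infty$. Since $J_{P_N}''$ is the Lebesgue--Stieltjes measure of the increasing derivative, its total mass is $0-(-\log M_N)=\log M_N$. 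Dividing by $2\pi$ gives the statement for $\rho_{P_N}$ via \eqref{eq:Riesz-measure}, and the probability-measure claim is immediate when $M_N>1$.

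\textbf{Right end.} For $\sigma\to+\infty$ the dominant term is $a_1$. Since $\sum_{2\le n\le N}|a_n|n^{-\sigma}\to0$, for $\sigma$ large we have
\[
|F_{P_N}(\sigma,z)-a_1|<|a_1|/2
\]
uniformly on $\T^d$. Hence $\log|F_{P_N}(\sigma,z)/a_1|=\Ree\log(1+G(\sigma,z))$, where $G:=(F_{P_N}-a_1)/a_1$ and the principal branch is used. This is the real part of a power series in $G$, and $G$ is a polynomial in $z$ without constant term. Every power $G^k$ is then a combination of nonconstant characters, because all exponent vectors $\mathbf v(n)$ with $n\ge2$ are nonzero and sums of nonzero vectors in $\N_0^d$ are nonzero. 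So each term integrates to zero against Haar measure, and the uniformly convergent series may be integrated termwise. By Proposition~\ref{prop:bohr-jessen} this gives $J_{P_N}(\sigma)=\log|a_1|$ for all large $\sigma$.

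\textbf{Left end.} For $\sigma\to-\infty$ the term $a_{M_N}M_N^{-\sigma}z^{\mathbf v(M_N)}$ dominates, since $n^{-\sigma}/M_N^{-\sigma}=(M_N/n)^{\sigma}\to0$ for $n<M_N$. Factor this monomial out:
\[
F_{P_N}=a_{M_N}M_N^{-\sigma}z^{\mathbf v(M_N)}\bigl(1+H\bigr),
\qquad
H=\sum_{n<M_N}\frac{a_n}{a_{M_N}}\Bigl(\frac{M_N}{n}\Bigr)^{\sigma}z^{\mathbf v(n)-\mathbf v(M_N)}.
\]
The monomial factor has modulus one on $\T^d$. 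The Laurent polynomial $H$ has frequencies $\mathbf v(n)-\mathbf v(M_N)$, which are nonzero by Proposition~\ref{prop:prime-injectivity}, and $\|H\|_\infty<1/2$ for $\sigma$ very negative.

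This is the step I expect to be the main obstacle. Powers of $H$ could in principle produce the zero frequency, since the vectors $\mathbf v(n)-\mathbf v(M_N)$ are differences and some positive combination of them might cancel. I would handle this with a real linear functional, namely pairing with $\omega_N=(\log p_1,\ldots,\log p_d)$. Each frequency pairs to $\log(n/M_N)<0$, so every product of $k\ge1$ such frequencies pairs to a strictly negative number and cannot be the zero vector. Hence every term of the series for $\log(1+H)$ has mean zero, and
\[
J_{P_N}(\sigma)=\log|a_{M_N}|-\sigma\log M_N
\]
for $\sigma$ sufficiently negative. The same ordering argument also makes the right-end step transparent: there $\langle\mathbf v(n),\omega_N\rangle=\log n>0$ for $n\ge2$.

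Combining the two ends, $J_{P_N}'$ equals $0$ near $+\infty$ and $-\log M_N$ near $-\infty$. Therefore $J_{P_N}''(\R)=\log M_N$ and $\rho_{P_N}(\R)=\log M_N/(2\pi)$.
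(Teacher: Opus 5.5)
Your proof is correct, but it obtains the end slopes by a different route from the paper.

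The paper proves only asymptotic statements:
\[
J_{P_N}(\sigma)=\log|a_1|+o(1)\quad(\sigma\to+\infty),
\qquad
J_{P_N}(\sigma)=\log|a_{M_N}|-\sigma\log M_N+o(1)\quad(\sigma\to-\infty).
\]
Both follow at once from uniform dominance of a single term and continuity of $\log$ away from $0$. Convexity then does the rest: monotonicity of the one-sided derivatives and the secant-slope inequalities give $J_{P_N,+}'\to0$ and $J_{P_N,+}'\to-\log M_N$ at the two ends.

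You instead prove exact identities. You expand $\log(1+G)$ and $\log(1+H)$ as uniformly convergent power series on $\T^d$. Every power then has zero Haar mean, because each of its frequencies pairs with $\omega_N$ to a strictly positive (respectively strictly negative) number. This is a Jensen/Mahler-measure argument for one-sided spectra. Pairing with $\langle\cdot,\omega_N\rangle$ correctly handles the one delicate point, namely that sums of the differences $\mathbf v(n)-\mathbf v(M_N)$ might cancel to the zero frequency.

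Your route gives more. $J_{P_N}$ is exactly affine outside a compact interval, so $J_{P_N}''$ has compact support. The total mass $0-(-\log M_N)$ is then read off directly, with no limiting argument on the slopes.

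The paper's route is softer. It needs no character expansion and no ordering argument, because convexity turns $o(1)$ control of the values of $J_{P_N}$ into exact limits of its one-sided derivatives. That lets it bypass entirely the obstacle you had to resolve.
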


\begin{proof}
As $\sigma\to+\infty$,
\[
F_{P_N}(\sigma,z)=a_1+o(1)
\]
uniformly on $\T^d$, and therefore
\[
J_{P_N}(\sigma)=\log|a_1|+o(1).
\]
As $\sigma\to-\infty$, the highest supported term dominates uniformly:
\[
F_{P_N}(\sigma,z)
=
a_{M_N}M_N^{-\sigma}z^{\mathbf v(M_N)}(1+o(1)),
\]
whence
\[
J_{P_N}(\sigma)
=
\log|a_{M_N}|-\sigma\log M_N+o(1).
\]

Since $J_{P_N}$ is convex, these asymptotics determine the limiting
one-sided slopes.  Indeed, the usual secant-slope inequalities for a
convex function show that
\[
J_{P_N,+}'(\sigma)\longrightarrow0
\qquad(\sigma\to+\infty),
\]
and
\[
J_{P_N,+}'(\sigma)\longrightarrow-\log M_N
\qquad(\sigma\to-\infty).
\]

For a finite convex function, the one-sided derivatives are
nondecreasing \cite[Theorem~24.1]{Rockafellar1970}.  Thus its
distributional second derivative is the positive Lebesgue--Stieltjes
measure associated with the increasing right derivative
$J_{P_N,+}'$, characterized by
\[
J_{P_N}''((a,b])
=
J_{P_N,+}'(b)-J_{P_N,+}'(a);
\]
see, for example, Folland \cite[Section~1.5]{Folland1999}.
Letting $a\to-\infty$ and $b\to+\infty$ therefore gives
\begin{align*}
J_{P_N}''(\R)
&=
\lim_{b\to+\infty}J_{P_N,+}'(b)
-
\lim_{a\to-\infty}J_{P_N,+}'(a)\\
&=
0-(-\log M_N)\\
&=
\log M_N.
\end{align*}
This proves the first identity in \eqref{eq:totalmass}.  The second
identity follows from Proposition~\ref{prop:JT-density}:
\[
\rho_{P_N}(\R)
=
\frac{1}{2\pi}J_{P_N}''(\R)
=
\frac{\log M_N}{2\pi}.
\]
\end{proof}

\section{Translation-uniform anti-concentration}\label{sec:anti}

A \emph{Steinhaus random variable} is, by definition, a random variable distributed according to normalized Haar measure on the unit circle $\T$; see, for example, Kahane \cite[Chapter~1]{Kahane1985}. 

The Bessel function of the first kind of order zero is
\begin{equation}\label{eq:J0-definition}
J_0(u)
:=
\sum_{n=0}^{\infty}\frac{(-1)^n}{(n!)^2}
\left(\frac{u}{2}\right)^{2n}.
\end{equation}
For real $u$ it has the integral representation
\begin{equation}\label{eq:J0-integral}
J_0(u)
=
\frac1{2\pi}\int_0^{2\pi}e^{iu\cos\theta}\dd\theta,
\end{equation}
and the classical large-argument expansion implies $J_0(u)=O(u^{-1/2})$ as $u\to+\infty$; see Watson \cite[\S\S2.11, 2.2 and 7.21]{Watson1944}.

\begin{lemma}[Translated logarithmic anti-concentration]\label{lem:Steinhaus}
Fix $K\ge1$.  There exists $C_K<\infty$ such that the following holds for every $m\ge5$.  Let $Z_1,\ldots,Z_m$ be independent Steinhaus variables and let $b_1,\ldots,b_m>0$ satisfy
\begin{equation}\label{eq:comparable-b}
\frac{\max_jb_j}{\min_jb_j}\le K.
\end{equation}
Set
\begin{equation}\label{eq:S-def}
S^2=\sum_{j=1}^m b_j^2.
\end{equation}
Then, uniformly for every $a\in\C$,
\begin{equation}\label{eq:translated-log-main}
\mathbb E\left(\log\left|a+\sum_{j=1}^m b_jZ_j\right|\right)
\ge
\log S-C_K.
\end{equation}
\end{lemma}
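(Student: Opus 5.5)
The plan is to show that the law of $X:=a+\sum_{j=1}^m b_jZ_j$ on $\C\cong\R^2$ has a Lebesgue density bounded by $C_K/S^2$, uniformly in $a$. This gives a small-ball estimate, which in turn controls the negative part of the logarithm. The translation $a$ only shifts the density, so the sup-norm bound is automatically uniform in $a$. This is why I would pass through the characteristic function rather than argue directly with $\log$.

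\textbf{Step 1 (reduction to a density bound).} For $u>0$ write $E:=\mathbb E\log^-(|X|/S)=\int_0^1\mathbb P(|X|<uS)\,\frac{\dd u}{u}$. Then
\[
\mathbb E\log|X|\ge \log S+\mathbb E\log(|X|/S)\ge \log S-E .
\]
If $X$ has a density $f_X\le C_K/S^2$, then $\mathbb P(|X|<uS)\le \pi C_K u^2$. Hence $E\le \pi C_K/2$, and \eqref{eq:translated-log-main} follows. So everything reduces to bounding $\|f_X\|_\infty$.

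\textbf{Step 2 (Fourier bound).} Identify $\xi\in\C$ with a frequency in $\R^2$. By \eqref{eq:J0-integral} and independence, the characteristic function of $Y=\sum_j b_jZ_j$ is
\[
\varphi(\xi)=\prod_{j=1}^m J_0(b_j|\xi|),
\]
and $|\mathbb E e^{i\langle\xi,X\rangle}|=|\varphi(\xi)|$. If $\varphi\in L^1(\R^2)$, Fourier inversion gives $\|f_X\|_\infty\le (2\pi)^{-2}\|\varphi\|_{L^1}$, uniformly in $a$. It therefore suffices to prove $\|\varphi\|_{L^1}\le C_K/S^2$.

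Write $b_-=\min_j b_j$ and $b_+=\max_j b_j$. By \eqref{eq:comparable-b}, $b_+\le Kb_-$ and $S^2\asymp_K m\,b_-^2$. I would use three facts about $J_0$, all read off from \eqref{eq:J0-definition}, \eqref{eq:J0-integral} and the bound $J_0(u)=O(u^{-1/2})$:
\begin{enumerate}[label=(\roman*)]
\item $|J_0(u)|\le e^{-cu^2}$ for $0\le u\le c_0$;
\item for each $\eta_0>0$ there is $\eta>0$ with $|J_0(u)|\le 1-\eta$ for $u\ge \eta_0$, since $|J_0(u)|<1$ for $u>0$ and $J_0(u)\to0$;
\item $|J_0(u)|\le C u^{-1/2}$ for all $u>0$.
\end{enumerate}

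Split $\R^2$ into three regions.
\begin{itemize}
\item $|\xi|\le c_0/b_+$. Here (i) gives $|\varphi|\le e^{-cS^2|\xi|^2}$, whose integral is $O(S^{-2})$.
\item $c_0/b_+\le|\xi|\le c_0/b_-$. Every argument satisfies $b_j|\xi|\ge c_0/K$, so (ii) gives $|\varphi|\le(1-\eta)^m$. The area of this region is $O(b_-^{-2})=O_K(m/S^2)$, so its contribution is $O_K(m(1-\eta)^m/S^2)=O_K(S^{-2})$.
\item $|\xi|\ge c_0/b_-$. Apply (iii) to five factors and (ii) to the remaining $m-5$ factors. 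This gives $|\varphi|\le C(1-\eta)^{m-5}(b_-|\xi|)^{-5/2}$. The exponent $5/2>2$ makes this integrable in $\R^2$, which is exactly where the hypothesis $m\ge5$ is used. The integral is $O\bigl((1-\eta)^{m}b_-^{-2}\bigr)=O_K(S^{-2})$.
\end{itemize}
Summing the three contributions gives $\|\varphi\|_{L^1}\le C_K/S^2$. This bound also justifies the Fourier inversion, since it shows $\varphi\in L^1$.

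\textbf{Main obstacle.} The delicate point is getting the scale $S^{-2}$, rather than merely $b_-^{-2}$, in the $L^1$ bound. The intermediate and tail regions have area of order $m/S^2$, and this growth in $m$ must be absorbed by the geometric factor $(1-\eta)^m$. That requires the uniform gap (ii) to hold on all of $[c_0/K,\infty)$ with $\eta$ depending only on $K$. I would establish (ii) first, by continuity of $J_0$ and the decay $J_0(u)\to0$, choosing $c_0$ so that (i) holds on $[0,c_0]$ via the Taylor expansion $J_0(u)=1-u^2/4+O(u^4)$. Everything else is bookkeeping of constants depending only on $K$.
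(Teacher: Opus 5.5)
Your proposal is correct and follows essentially the same route as the paper: the product formula $\prod_j J_0(b_j|\xi|)$ for the characteristic function, a three-region $L^1$ bound, Fourier inversion to get a density bound uniform in the translation, a small-ball estimate, and the tail integral for $\log^-$. The differences are cosmetic. You keep the unnormalized scale $S^{-2}$ instead of normalizing $c_j=b_j/S$, and in the tail region you use the $u^{-1/2}$ decay on only five factors together with the uniform gap $\sup_{u\ge\eta_0}|J_0(u)|<1$ on the remaining factors, whereas the paper applies the decay to all $m$ factors and chooses $R>C_0^2$ to obtain a geometric factor.
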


\begin{proof}
Put
\[
c_j=\frac{b_j}{S},
\qquad
X=\sum_{j=1}^m c_jZ_j.
\]
Then $\sum_jc_j^2=1$.  If $b_{\min}=\min_jb_j$ and $b_{\max}=\max_jb_j$, then
\[
mb_{\min}^2\le S^2\le mb_{\max}^2,
\]
and \eqref{eq:comparable-b} gives
\begin{equation}\label{eq:cj-comparable}
\frac{1}{K\sqrt m}\le c_j\le\frac{K}{\sqrt m}
\qquad(1\le j\le m).
\end{equation}

Identify $\C$ with $\R^2$ and let $\mu_X$ denote the probability law of $X$.  We use the Fourier convention
\[
\widehat\mu_X(\xi)
:=\int_{\R^2}e^{i\langle\xi,x\rangle}\dd\mu_X(x)
=\mathbb E\left(e^{i\langle\xi,X\rangle}\right).
\]
If $Z_j=e^{i\Theta_j}$ with $\Theta_j$ uniform on $[0,2\pi)$, rotational invariance allows us to take the direction of $\xi$ to be the first coordinate axis.  Hence, by \eqref{eq:J0-integral},
\begin{align*}
\mathbb E\left(e^{i\langle\xi,c_jZ_j\rangle}\right)
&=\frac1{2\pi}\int_0^{2\pi}
 e^{ic_j|\xi|\cos\theta}\dd\theta\\
&=J_0(c_j|\xi|).
\end{align*}
Independence therefore gives
\begin{equation}\label{eq:char-product}
\widehat\mu_X(\xi)
=\prod_{j=1}^mJ_0(c_j|\xi|).
\end{equation}

From the series \eqref{eq:J0-definition},
\[
J_0(u)=1-\frac{u^2}{4}+O(u^4)
\qquad(u\to0).
\]
Thus there exist absolute constants $u_0\in(0,1]$ and $c_0>0$ such that
\begin{equation}\label{eq:J0-small}
|J_0(u)|\le e^{-c_0u^2}
\qquad(0\le u\le u_0).
\end{equation}
The large-argument expansion quoted above yields an absolute constant $C_0$ such that
\begin{equation}\label{eq:J0-large}
|J_0(u)|\le C_0u^{-1/2}
\qquad(u\ge1).
\end{equation}
Moreover, \eqref{eq:J0-integral} and the triangle inequality give
\[
|J_0(u)|
\le\frac1{2\pi}\int_0^{2\pi}|e^{iu\cos\theta}|\dd\theta
=1.
\]
If $u>0$, equality would force $e^{iu\cos\theta}$ to have constant argument for almost every $\theta$, which is impossible.  Hence
\begin{equation}\label{eq:J0-strict}
|J_0(u)|<1\qquad(u>0).
\end{equation}

Write $r=|\xi|$ and split the radial integral into three regions.

\smallskip
\noindent\emph{Region I: $0\le r\le u_0\sqrt m/K$.}
By \eqref{eq:cj-comparable}, $c_jr\le u_0$, so
\begin{equation}\label{eq:region-I}
|\widehat\mu_X(\xi)|
\le
\exp\left(-c_0r^2\sum_{j=1}^mc_j^2\right)
=e^{-c_0r^2}.
\end{equation}
Consequently
\begin{equation}\label{eq:region-I-integral}
2\pi\int_0^{u_0\sqrt m/K}r|\widehat\mu_X(r)|\dd r
\le
2\pi\int_0^\infty re^{-c_0r^2}\dd r
=\frac{\pi}{c_0}.
\end{equation}

\smallskip
\noindent\emph{Region II: $u_0\sqrt m/K\le r\le RK\sqrt m$.}
Choose
\begin{equation}\label{eq:R-choice}
R>\max\{1,u_0,C_0^2\}.
\end{equation}
Then
\begin{equation}\label{eq:middle-arguments}
\frac{u_0}{K^2}\le c_jr\le RK^2.
\end{equation}
By continuity and \eqref{eq:J0-strict},
\begin{equation}\label{eq:rhoK}
\rho_K
:=
\max_{u\in[u_0/K^2,RK^2]}|J_0(u)|
<1.
\end{equation}
Therefore $|\widehat\mu_X(\xi)|\le\rho_K^m$.  Since the annulus has area at most $\pi R^2K^2m$,
\begin{equation}\label{eq:region-II-integral}
\int_{u_0\sqrt m/K\le|\xi|\le RK\sqrt m}
|\widehat\mu_X(\xi)|\dd\xi
\le \pi R^2K^2m\rho_K^m
\le C_K.
\end{equation}

\smallskip
\noindent\emph{Region III: $r\ge RK\sqrt m$.}
Now $c_jr\ge R\ge1$, so \eqref{eq:J0-large} and \eqref{eq:cj-comparable} yield
\begin{equation}\label{eq:region-III-pointwise}
|\widehat\mu_X(\xi)|
\le
\left(C_0K^{1/2}m^{1/4}r^{-1/2}\right)^m.
\end{equation}
For $m>4$,
\begin{align}
&\int_{RK\sqrt m}^{\infty}
 r\left(C_0K^{1/2}m^{1/4}r^{-1/2}\right)^m\dd r\notag\\
&\quad=
C_0^mK^{m/2}m^{m/4}
\int_{RK\sqrt m}^{\infty}r^{1-m/2}\dd r\notag\\
&\quad=
C_0^mK^{m/2}m^{m/4}
\frac{(RK\sqrt m)^{2-m/2}}{m/2-2}\notag\\
&\quad=
\frac{K^2mR^2}{m/2-2}
\left(\frac{C_0^2}{R}\right)^{m/2}.
\label{eq:region-III-exact}
\end{align}
For $m\ge5$,
\[
\frac{m}{m/2-2}=\frac{2m}{m-4}\le10,
\]
and $C_0^2/R<1$.  Thus the last expression is bounded uniformly in $m$, and multiplication by the polar factor $2\pi$ gives
\begin{equation}\label{eq:region-III-integral}
\int_{|\xi|\ge RK\sqrt m}|\widehat\mu_X(\xi)|\dd\xi\le C_K.
\end{equation}

Combining the three regions,
\begin{equation}\label{eq:L1-characteristic}
\|\widehat\mu_X\|_{L^1(\R^2)}\le C_K.
\end{equation}
The Fourier inversion theorem for finite measures with integrable Fourier transform (see Rudin \cite[Section~1.5.2]{Rudin1962}) now implies that $\mu_X$ is absolutely continuous with respect to planar Lebesgue measure.  Its bounded continuous density is
\begin{equation}\label{eq:density-definition}
f_X(x)
:=
\frac1{(2\pi)^2}
\int_{\R^2}e^{-i\langle\xi,x\rangle}
\widehat\mu_X(\xi)\dd\xi,
\end{equation}
so $\dd\mu_X(x)=f_X(x)\dd x$ and
\begin{equation}\label{eq:density-bound}
\|f_X\|_\infty
\le(2\pi)^{-2}\|\widehat\mu_X\|_1
\le C_K.
\end{equation}
Consequently, uniformly in $w\in\C$ and $0<r\le1$,
\begin{align}
\mathbb P(|X+w|<r)
&=\int_{\{x:|x+w|<r\}}f_X(x)\dd x\notag\\
&\le \pi r^2\|f_X\|_\infty
\le C_Kr^2.
\label{eq:small-ball}
\end{align}

Set $Y:=\log^-|X+w|\ge0$.  Pointwise,
\[
Y=\int_0^\infty\mathbf 1_{\{Y>t\}}\dd t.
\]
Tonelli's theorem therefore gives the tail-integral formula
\begin{align}
\mathbb E(Y)
&=\int_0^\infty\mathbb P(Y>t)\dd t\notag\\
&=\int_0^\infty\mathbb P(|X+w|<e^{-t})\dd t\notag\\
&\le\int_0^\infty\min\{1,C_Ke^{-2t}\}\dd t.
\label{eq:negative-log-tail}
\end{align}
Increasing $C_K$ if necessary, we may assume $C_K\ge1$.  Put
\[
t_K:=\frac12\log C_K.
\]
Then $C_Ke^{-2t}\ge1$ exactly when $0\le t\le t_K$, and therefore
\begin{align}
\int_0^\infty\min\{1,C_Ke^{-2t}\}\dd t
&=\int_0^{t_K}1\dd t
+C_K\int_{t_K}^\infty e^{-2t}\dd t\notag\\
&=t_K+\frac{C_K}{2}e^{-2t_K}\notag\\
&=\frac12\log C_K+\frac12
=:C_K'.
\label{eq:negative-log-bound}
\end{align}
Hence
\[
\mathbb E(Y)\le C_K'.
\]
Since $\log^+|X+w|\ge0$,
\begin{equation}\label{eq:normalized-log-bound}
\mathbb E\bigl(\log|X+w|\bigr)\ge-C_K'.
\end{equation}
Finally,
\[
a+\sum_{j=1}^mb_jZ_j
=S\left(\frac aS+X\right),
\]
so \eqref{eq:normalized-log-bound} with $w=a/S$ proves \eqref{eq:translated-log-main}.
\end{proof}

\begin{remark}\label{rem:translation-important}
The uniformity in the translation $a$ is essential for the application below.  After the other torus coordinates are fixed, the isolated prime-coordinate block is translated by a complex number depending on those fixed coordinates, and the estimate must remain uniform in that translation.
\end{remark}

\section{The abstract isolated-prime criterion}\label{sec:abstract}

Let $P_N(s)=\sum_{n\le N}a_n n^{-s}$ and let
\[
\mathcal Q_N\subset\{p\text{ prime}:N/2<p\le N\}.
\]
If $p\in\mathcal Q_N$ and $n\le N$ is divisible by $p$, then $n=p$.  Thus the Bohr coordinate $z_p$ occurs only in the linear monomial $a_pp^{-\sigma}z_p$, and
\begin{equation}\label{eq:isolated-decomposition}
F_{P_N}(\sigma,z)
=A_N(\sigma,z')
+\sum_{p\in\mathcal Q_N}a_pp^{-\sigma}z_p,
\end{equation}
where $A_N$ is independent of the coordinates indexed by $\mathcal Q_N$.

For a finite set $\mathcal Q$ of prime coordinates, write
\[
\dd m_{\mathcal Q}(z)
:=\prod_{p\in\mathcal Q}\dd m(z_p),
\]
where $m$ is normalized Haar probability measure on $\T$.  If $\mathcal P(N)$ denotes the set of all primes not exceeding $N$, set
\[
\mathcal R_N:=\mathcal P(N)\setminus\mathcal Q_N.
\]
Then the Bohr torus decomposes as
\[
\T^d=\T^{\mathcal R_N}\times\T^{\mathcal Q_N},
\qquad
\dd m_d=\dd m_{\mathcal R_N}\,\dd m_{\mathcal Q_N}.
\]

\begin{proposition}[Isolated-prime lower bound]\label{prop:isolated-lower-general}
Let $I\subset\R$ be compact.  Suppose that there exists $K_I\ge1$ such that
\begin{equation}\label{eq:isolated-comparable-general}
K_I^{-1}
\le
\frac{|a_p|p^{-\sigma}}{|a_q|q^{-\sigma}}
\le K_I
\end{equation}
for every $\sigma\in I$ and every $p,q\in\mathcal Q_N$, and suppose $\#\mathcal Q_N\ge5$.  Then
\begin{equation}\label{eq:isolated-lower-general}
J_{P_N}(\sigma)
\ge
\frac12\log\sum_{p\in\mathcal Q_N}|a_p|^2p^{-2\sigma}
-C_{K_I}
\qquad(\sigma\in I),
\end{equation}
where $C_{K_I}$ is the constant from Lemma~\ref{lem:Steinhaus}.
\end{proposition}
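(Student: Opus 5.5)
The plan is to start from the Bohr--Jessen identity (Proposition~\ref{prop:bohr-jessen}), which gives $J_{P_N}(\sigma)=\int_{\T^d}\log|F_{P_N}(\sigma,z)|\dd m_d(z)$. We then use the product decomposition $\T^d=\T^{\mathcal R_N}\times\T^{\mathcal Q_N}$ together with the isolated decomposition \eqref{eq:isolated-decomposition}. Since $\log|F_{P_N}(\sigma,\cdot)|\in L^1(\T^d)$ by Lemma~\ref{lem:log-integrable}, Fubini's theorem applies and gives
\[
J_{P_N}(\sigma)=\int_{\T^{\mathcal R_N}}\left(\int_{\T^{\mathcal Q_N}}\log\Bigl|A_N(\sigma,z')+\sum_{p\in\mathcal Q_N}a_pp^{-\sigma}z_p\Bigr|\dd m_{\mathcal Q_N}\right)\dd m_{\mathcal R_N}(z').
\]
For almost every fixed $z'$, the inner integral is finite. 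It equals $\mathbb E\log|a+\sum_j b_jZ_j|$, where $a=A_N(\sigma,z')$ and the $Z_j=z_{p}$ are independent Steinhaus variables under the product Haar measure.

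Next we pass from the complex coefficients $a_pp^{-\sigma}$ to positive weights. Write $a_pp^{-\sigma}=b_pe^{i\phi_p}$ with $b_p=|a_p|p^{-\sigma}$. Hypothesis \eqref{eq:isolated-comparable-general} forces every $a_p\neq0$: the ratio bound fails if any single $a_p$ vanishes, since $\#\mathcal Q_N\ge2$. So every $b_p>0$. The rotations $z_p\mapsto e^{i\phi_p}z_p$ preserve Haar measure, so the inner integral equals $\mathbb E\log|a+\sum_p b_pZ_p|$. The ratio bound gives $\max b_p/\min b_p\le K_I$, and $m=\#\mathcal Q_N\ge5$. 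Lemma~\ref{lem:Steinhaus} therefore bounds the inner integral below by $\tfrac12\log\sum_{p\in\mathcal Q_N}b_p^2-C_{K_I}$, uniformly in the translation $a$ and hence in $z'$.

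Finally we integrate this constant lower bound over $\T^{\mathcal R_N}$ against a probability measure, which yields \eqref{eq:isolated-lower-general} for each $\sigma\in I$.

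The step needing most care is the legitimacy of Fubini with a function that can take the value $-\infty$. This is handled by the $L^1$ statement of Lemma~\ref{lem:log-integrable}, since $F_{P_N}(\sigma,\cdot)$ is a nonzero polynomial because $a_1=1$ (or at least $a_1\ne0$). The other point to watch is that the constant $C_{K_I}$ depends only on $K_I$ and not on $m$, $N$, or $\sigma$. This is exactly what the uniformity built into Lemma~\ref{lem:Steinhaus} provides.
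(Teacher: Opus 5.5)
Your proposal is correct and follows the paper's proof essentially step for step: you condition on the non-isolated coordinates $z'$, absorb the phases of $a_p p^{-\sigma}$ by rotation invariance of Haar measure, apply Lemma~\ref{lem:Steinhaus} with translation $A_N(\sigma,z')$ uniformly in $z'$, and integrate against the probability measure $m_{\mathcal R_N}$. Your explicit justification of Fubini via Lemma~\ref{lem:log-integrable} and your observation that the comparability hypothesis forces $a_p\neq0$ are points the paper's proof leaves implicit, so they only add care.
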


\begin{proof}
	Fix $\sigma\in I$ and fix all torus coordinates not indexed by
	$\mathcal Q_N$.  Denote this collection by
	\[
	z'=(z_p)_{p\in\mathcal R_N}\in\T^{\mathcal R_N}.
	\]
	Then, by \eqref{eq:isolated-decomposition},
	\[
	F_{P_N}(\sigma,z',z_{\mathcal Q_N})
	=
	A_N(\sigma,z')
	+\sum_{p\in\mathcal Q_N}a_pp^{-\sigma}z_p,
	\]
	where, for the fixed values of $\sigma$ and $z'$, the quantity
	$A_N(\sigma,z')\in\C$ is independent of the coordinates
	$z_p$, $p\in\mathcal Q_N$.
	
	Write
	\[
	a_p=|a_p|e^{i\vartheta_p},
	\qquad p\in\mathcal Q_N.
	\]
	Since normalized Haar measure on $\T$ is invariant under rotations,
	the changes of variables
	\[
	z_p\longmapsto e^{i\vartheta_p}z_p,
	\qquad p\in\mathcal Q_N,
	\]
	absorb the phases of the coefficients without changing the inner
	integral.  Hence, for fixed $\sigma$ and $z'$, Lemma~\ref{lem:Steinhaus}
	applies with the complex translation
	\[
	a=A_N(\sigma,z')
	\]
	and positive coefficients
	\[
	b_p(\sigma):=|a_p|p^{-\sigma},
	\qquad p\in\mathcal Q_N.
	\]
	
	By \eqref{eq:isolated-comparable-general},
	\[
	\frac{\max_{p\in\mathcal Q_N}b_p(\sigma)}
	{\min_{p\in\mathcal Q_N}b_p(\sigma)}
	\le K_I.
	\]
	Since $\#\mathcal Q_N\ge5$, Lemma~\ref{lem:Steinhaus} therefore gives,
	uniformly in the translation $A_N(\sigma,z')$,
	\begin{align}
	&\int_{\T^{\mathcal Q_N}}
	\log|F_{P_N}(\sigma,z',z_{\mathcal Q_N})|
	\dd m_{\mathcal Q_N}(z_{\mathcal Q_N})\notag\\
	&\qquad\ge
	\frac12\log
	\sum_{p\in\mathcal Q_N}|a_p|^2p^{-2\sigma}
	-C_{K_I}.
	\label{eq:conditioned-isolated-bound}
	\end{align}
	
	The right-hand side of \eqref{eq:conditioned-isolated-bound} is
	independent of $z'$.  Integrating over $\T^{\mathcal R_N}$ and using
	the product decomposition
	\[
	\dd m_d
	=
	\dd m_{\mathcal R_N}\,\dd m_{\mathcal Q_N},
	\]
	together with Proposition~\ref{prop:bohr-jessen}, we obtain
	\begin{align*}
	J_{P_N}(\sigma)
	&=
	\int_{\T^d}
	\log|F_{P_N}(\sigma,z)|\dd m_d(z)\\
	&=
	\int_{\T^{\mathcal R_N}}
	\left[
	\int_{\T^{\mathcal Q_N}}
	\log|F_{P_N}(\sigma,z',z_{\mathcal Q_N})|
	\dd m_{\mathcal Q_N}(z_{\mathcal Q_N})
	\right]
	\dd m_{\mathcal R_N}(z')\\
	&\ge
	\int_{\T^{\mathcal R_N}}
	\left[
	\frac12\log
	\sum_{p\in\mathcal Q_N}|a_p|^2p^{-2\sigma}
	-C_{K_I}
	\right]
	\dd m_{\mathcal R_N}(z')\\
	&=
	\frac12\log
	\sum_{p\in\mathcal Q_N}|a_p|^2p^{-2\sigma}
	-C_{K_I},
	\end{align*}
	because $m_{\mathcal R_N}(\T^{\mathcal R_N})=1$.
\end{proof}

\begin{proof}[Proof of Theorem~\ref{thm:abstract-main}]
We first record that hypothesis \ref{H2intro} automatically determines the scale of $M_N$.  For all sufficiently large $N$, the inequalities in \eqref{eq:H2comp-intro} force $a_p\neq0$ for every $p\in\mathcal Q_N$.  Since $\#\mathcal Q_N\to\infty$, the set $\mathcal Q_N$ is nonempty for all sufficiently large $N$; choosing any $p\in\mathcal Q_N$ gives
\[
\frac N2<p\le M_N\le N.
\]
Consequently
\begin{equation}\label{eq:MN-growth-automatic}
1-\frac{\log2}{\log N}
<\frac{\log M_N}{\log N}\le1,
\end{equation}
and hence
\begin{equation}\label{eq:MN-growth-intro}
\frac{\log M_N}{\log N}\longrightarrow1.
\end{equation}
Thus no separate growth assumption on $M_N$ is needed.

Set
\[
v_N(\sigma):=\frac{J_{P_N}(\sigma)}{\log N}.
\]
By Proposition~\ref{prop:basic-bounds} and hypothesis \ref{H1intro},
\begin{equation}\label{eq:abstract-upper}
0\le v_N(\sigma)
\le
\frac1{2\log N}\log E_N(\sigma),
\end{equation}
so
\begin{equation}\label{eq:limsup-abstract}
\limsup_{N\to\infty}v_N(\sigma)
\le(\alpha-\sigma)_+.
\end{equation}

Fix $\sigma<\alpha$ and choose a compact interval $I\subset(-\infty,\alpha)$ containing $\sigma$.  Recall from \eqref{eq:isolated-energy-intro} that
\[
B_N(\sigma)=\sum_{p\in\mathcal Q_N}|a_p|^2p^{-2\sigma}.
\]
For all sufficiently large $N$, hypothesis \ref{H2intro} and Proposition~\ref{prop:isolated-lower-general} give
\begin{equation}\label{eq:abstract-lower}
v_N(\sigma)
\ge
\frac1{2\log N}\log B_N(\sigma)
-
\frac{C_{K_I}}{\log N}.
\end{equation}
Since the first term tends to $\alpha-\sigma$ and the second tends to zero,
\[
\liminf_{N\to\infty}v_N(\sigma)\ge\alpha-\sigma.
\]
Together with \eqref{eq:limsup-abstract}, this proves
\begin{equation}\label{eq:pointwise-left}
v_N(\sigma)\longrightarrow\alpha-\sigma
\qquad(\sigma<\alpha).
\end{equation}
For $\sigma\ge\alpha$, \eqref{eq:abstract-upper} and hypothesis \ref{H1intro} give
\begin{equation}\label{eq:pointwise-right}
v_N(\sigma)\longrightarrow0.
\end{equation}
Thus $v_N$ converges pointwise on $\R$ to
\[
V(\sigma):=(\alpha-\sigma)_+.
\]
Each $v_N$ is finite and convex.  By Rockafellar \cite[Theorem~10.8]{Rockafellar1970}, pointwise convergence of finite convex functions to the finite convex function $V$ implies uniform convergence on compact subsets.  This proves \eqref{eq:abstract-potential-intro}.

It remains to pass to the zero-density measures.  By Proposition~\ref{prop:total-mass},
\[
\mu_N:=\frac{1}{\log M_N}J_{P_N}''
\]
is a probability measure.  Let $\varphi\in C_c^\infty(\R)$.  Using the definition of the distributional derivative twice,
\begin{align}
\int_\R\varphi(\sigma)\dd\mu_N(\sigma)
&=\frac1{\log M_N}\langle J_{P_N}'',\varphi\rangle\notag\\
&=\frac1{\log M_N}\langle J_{P_N},\varphi''\rangle\notag\\
&=\frac1{\log M_N}
\int_\R J_{P_N}(\sigma)\varphi''(\sigma)\dd\sigma\notag\\
&=\frac{\log N}{\log M_N}
\int_\R v_N(\sigma)\varphi''(\sigma)\dd\sigma.
\label{eq:testfunction-measure}
\end{align}
The support of $\varphi''$ is compact, so local uniform convergence of $v_N$ and \eqref{eq:MN-growth-intro} imply
\begin{equation}\label{eq:testfunction-limit}
\lim_{N\to\infty}\int_\R\varphi\dd\mu_N
=\int_\R V(\sigma)\varphi''(\sigma)\dd\sigma.
\end{equation}
Now $V(\sigma)=\alpha-\sigma$ for $\sigma<\alpha$ and $V(\sigma)=0$ for $\sigma>\alpha$.  Let
\[
V(\sigma)=(\alpha-\sigma)_+.
\]
For every $\varphi\in C_c^\infty(\R)$,
\[
\langle V'',\varphi\rangle
=
\langle V,\varphi''\rangle
=
\int_{-\infty}^{\alpha}
(\alpha-\sigma)\varphi''(\sigma)\dd\sigma.
\]
Integrating by parts gives
\begin{align*}
\int_{-\infty}^{\alpha}
(\alpha-\sigma)\varphi''(\sigma)\dd\sigma
&=
\left[
(\alpha-\sigma)\varphi'(\sigma)
\right]_{-\infty}^{\alpha}
+
\int_{-\infty}^{\alpha}\varphi'(\sigma)\dd\sigma.
\end{align*}
The boundary term vanishes: at $\sigma=\alpha$ the factor
$\alpha-\sigma$ is zero, while at $-\infty$ one has
$\varphi'(\sigma)=0$ because $\varphi$ has compact support.  Hence
\begin{align*}
\int_{-\infty}^{\alpha}
(\alpha-\sigma)\varphi''(\sigma)\dd\sigma
&=
\int_{-\infty}^{\alpha}\varphi'(\sigma)\dd\sigma\\
&=
\left[\varphi(\sigma)\right]_{-\infty}^{\alpha}\\
&=
\varphi(\alpha),
\end{align*}
again because $\varphi(\sigma)=0$ for all sufficiently negative
$\sigma$.  Therefore
\[
\langle V'',\varphi\rangle=\varphi(\alpha)
=\langle\delta_\alpha,\varphi\rangle,
\]
and hence
\[
V''=\delta_\alpha
\]
in the sense of distributions, and \eqref{eq:testfunction-limit} proves convergence against every test function in $C_c^\infty(\R)$.  To pass to arbitrary $h\in C_c(\R)$, choose $\varphi_j\in C_c^\infty(\R)$ with $\|h-\varphi_j\|_\infty\to0$.  Since both $\mu_N$ and $\delta_\alpha$ are probability measures,
\[
\left|\int(h-\varphi_j)\dd\mu_N\right|
\le\|h-\varphi_j\|_\infty,
\qquad
\left|h(\alpha)-\varphi_j(\alpha)\right|
\le\|h-\varphi_j\|_\infty.
\]
First choose $j$ large and then let $N\to\infty$.  It follows that
\[
\int_\R h\dd\mu_N\longrightarrow h(\alpha)
\qquad(h\in C_c(\R)),
\]
which is vague convergence $\mu_N\to\delta_\alpha$.

For completeness, this vague convergence is in fact weak.  Choose $\psi\in C_c(\R)$ such that $0\le\psi\le1$ and $\psi\equiv1$ on a neighborhood of $\alpha$.  Vague convergence gives
\[
\int_\R\psi\dd\mu_N\longrightarrow\psi(\alpha)=1.
\]
Since each $\mu_N$ is a probability measure,
\[
0\le \int_\R(1-\psi)\dd\mu_N
=1-\int_\R\psi\dd\mu_N\longrightarrow0.
\]
Now let $g$ be bounded and continuous.  The product $g\psi$ belongs to $C_c(\R)$, so vague convergence yields
\[
\int_\R g\psi\dd\mu_N\longrightarrow g(\alpha)\psi(\alpha)=g(\alpha).
\]
Moreover,
\[
\left|\int_\R g(1-\psi)\dd\mu_N\right|
\le \|g\|_\infty\int_\R(1-\psi)\dd\mu_N\longrightarrow0.
\]
Therefore
\[
\int_\R g\dd\mu_N\longrightarrow g(\alpha),
\]
which is precisely weak convergence to $\delta_\alpha$.  This proves \eqref{eq:abstract-measure-intro}.
\end{proof}

\section{The Riemann zeta partial sums}\label{sec:zeta}

We now verify the hypotheses of Theorem~\ref{thm:abstract-main} in the simplest case.

\begin{proof}[Proof of Theorem~\ref{thm:zeta-intro}]
Here $a_n=1$ for $n\le N$, so $M_N=N$ and
\begin{equation}\label{eq:zeta-global-energy}
E_N(\sigma)=\sum_{n\le N}n^{-2\sigma}.
\end{equation}
If $\sigma<1/2$, the integral test (or Euler summation) gives
\begin{equation}\label{eq:zeta-energy-left}
E_N(\sigma)
=
\frac{N^{1-2\sigma}}{1-2\sigma}
+O_\sigma(1+N^{-2\sigma}),
\end{equation}
where the notation $O_\sigma$ means that the implied constant may
depend on $\sigma$, but not on $N$.  Since $\sigma<1/2$,
\[
\frac{1}{N^{1-2\sigma}}\longrightarrow0,
\qquad
\frac{N^{-2\sigma}}{N^{1-2\sigma}}
=\frac1N\longrightarrow0.
\]
Hence
\[
E_N(\sigma)
=
\frac{N^{1-2\sigma}}{1-2\sigma}\bigl(1+o(1)\bigr),
\]
and therefore
\[
\frac{1}{2\log N}\log E_N(\sigma)
\longrightarrow
\frac12-\sigma.
\]
At $\sigma=1/2$,
\[
E_N(1/2)=\log N+O(1),
\]
and for $\sigma>1/2$ the sums are bounded above and below by positive constants.  Hence hypothesis \ref{H1intro} holds with
\[
\alpha=\frac12.
\]

Let
\begin{equation}\label{eq:zeta-QN}
\mathcal P_N:=\{p\text{ prime}:N/2<p\le N\}.
\end{equation}
The prime number theorem gives
\begin{equation}\label{eq:dyadic-PNT}
\#\mathcal P_N
=\pi(N)-\pi(N/2)
=\left(\frac12+o(1)\right)\frac{N}{\log N}.
\end{equation}
Let $I\subset\R$ be compact and put $M_I:=\max_{\tau\in I}|\tau|$.  If $p,q\in(N/2,N]$ and $\sigma\in I$, then
\begin{equation}\label{eq:zeta-comparable}
2^{-M_I}
\le
\frac{p^{-\sigma}}{q^{-\sigma}}
\le
2^{M_I}.
\end{equation}
Thus the corresponding coefficients are uniformly comparable.  Moreover, for every $p\in(N/2,N]$,
\[
2^{-2M_I}N^{-2\sigma}
\le p^{-2\sigma}
\le2^{2M_I}N^{-2\sigma}
\qquad(\sigma\in I).
\]
Consequently
\begin{equation}\label{eq:zeta-isolated-energy-bounds}
2^{-2M_I}N^{-2\sigma}\#\mathcal P_N
\le B_N(\sigma):=\sum_{p\in\mathcal P_N}p^{-2\sigma}
\le2^{2M_I}N^{-2\sigma}\#\mathcal P_N.
\end{equation}
If $I\subset(-\infty,1/2)$, then \eqref{eq:dyadic-PNT} and \eqref{eq:zeta-isolated-energy-bounds} give, uniformly for $\sigma\in I$,
\begin{align*}
\frac1{2\log N}\log B_N(\sigma)
&=-\sigma+\frac1{2\log N}\log\#\mathcal P_N+O_I\left(\frac1{\log N}\right)\\
&\longrightarrow\frac12-\sigma.
\end{align*}
In particular,
\begin{equation}\label{eq:zeta-isolated-energy}
B_N(\sigma)=N^{1-2\sigma+o(1)}
\end{equation}
at exponent level for every fixed $\sigma<1/2$.  Taking the abstract set $\mathcal Q_N$ in Theorem~\ref{thm:abstract-main} to be $\mathcal P_N$, hypothesis \ref{H2intro} follows.  Theorem~\ref{thm:abstract-main} now proves both \eqref{eq:zeta-potential-intro} and \eqref{eq:zeta-measure-intro}.
\end{proof}

Combining Theorem~\ref{thm:zeta-intro} with Proposition~\ref{prop:JT-density} gives the direct density formulation.

\begin{corollary}[Critical-line concentration for $\zeta_N$]\label{cor:zeta-density}
For every $\varepsilon>0$,
\begin{equation}\label{eq:zeta-density-coro}
\lim_{N\to\infty}
\frac{2\pi}{\log N}
\lim_{T\to\infty}\frac1{2T}
\#\left\{
\rho=\beta+i\gamma:\zeta_N(\rho)=0,
\ |\beta-1/2|\ge\varepsilon,
\ |\gamma|<T
\right\}
=0,
\end{equation}
with the usual convention of avoiding boundary lines carrying zero frequency.
\end{corollary}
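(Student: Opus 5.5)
The plan is to read the corollary off Theorem~\ref{thm:zeta-intro} through Proposition~\ref{prop:JT-density}, using only the weak convergence $\nu_N:=J_N''/\log N\xrightarrow{\mathrm w}\delta_{1/2}$; recall that here $M_N=N$.

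\emph{Step 1: write the count as a measure.} Let $\varepsilon>0$. The set $\{|\beta-1/2|\ge\varepsilon\}$ is the union of the two half-lines $\beta\le 1/2-\varepsilon$ and $\beta\ge1/2+\varepsilon$. All zeros of $\zeta_N$ lie in a vertical strip of bounded real part, because $J_N$ is affine outside a compact set. So each half-line can be replaced by a bounded interval $(a,1/2-\varepsilon)$ or $(1/2+\varepsilon,b)$ with $a$ very negative and $b$ very positive. Since the zeros lie in a strip independent of $T$, this replacement does not change the count.

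\emph{Step 2: apply the density formula and handle boundaries.} By \eqref{eq:JT-density}, the inner $T$-limit exists for open strips. It equals $(2\pi)^{-1}J_N''$ of the corresponding open interval. The stated boundary convention lets me treat the closed condition $|\beta-1/2|\ge\varepsilon$ through open intervals. If I want to avoid the convention, I can shrink $\varepsilon$ to some $\varepsilon'<\varepsilon$ at which $J_N''$ has no atom for any $N$; this is possible because only countably many atoms occur. Then the open region $|\beta-1/2|>\varepsilon'$ contains the closed one. With either choice, the quantity in \eqref{eq:zeta-density-coro} is bounded above by
\[
\nu_N\bigl(\{\sigma:|\sigma-1/2|>\varepsilon'\}\bigr)
\]
for some fixed $0<\varepsilon'\le\varepsilon$. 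The factor $2\pi/\log N$ is exactly the normalization that turns $(2\pi)^{-1}J_N''$ into $\nu_N$.

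\emph{Step 3: pass to the limit.} The set $F=\{|\sigma-1/2|\ge\varepsilon'\}$ is closed and does not contain $1/2$. The portmanteau theorem gives $\limsup_N\nu_N(F)\le\delta_{1/2}(F)=0$. Alternatively, use a continuous cutoff $\psi$ that equals $1$ near $1/2$ and vanishes on $F$, exactly as in the weak-convergence argument at the end of the proof of Theorem~\ref{thm:abstract-main}. In either form, $\nu_N(F)\le\int(1-\psi)\,d\nu_N\to0$.

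The only delicate point is bookkeeping with the boundary lines and the closed inequality. This is handled by the enlargement to $\varepsilon'$ in Step 2, which costs nothing because the limit is $0$ for every $\varepsilon'>0$.
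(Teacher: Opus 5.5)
Your proposal is correct and follows the paper's route. The paper obtains the corollary by combining the weak convergence $J_N''/\log N\to\delta_{1/2}$ of Theorem~\ref{thm:zeta-intro} with the Jessen--Tornehave density formula of Proposition~\ref{prop:JT-density}, and your Steps 1--3 spell out the bookkeeping with half-lines, boundary lines and the portmanteau theorem that the paper leaves implicit. Two minor points: the atom-avoidance in Step 2 is not needed, because for any $\varepsilon'<\varepsilon$ the open region $|\beta-1/2|>\varepsilon'$ already contains the closed one and Proposition~\ref{prop:JT-density} applies to open strips regardless of atoms; and the bounded zero strip follows more directly from the dominance of the terms $1$ and $N^{-s}$ than from the affineness of $J_N$.
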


\begin{remark}\label{rem:zeta-support}
Corollary~\ref{cor:zeta-density} does not imply that all zeros approach $\Ree s=1/2$.  Mora's theorem \cite{Mora2013} shows that, for sufficiently large fixed $N$, real parts of zeros are dense throughout the admissible critical strip.  The two statements are compatible: Mora describes support, whereas \eqref{eq:zeta-density-coro} describes normalized vertical mass.
\end{remark}

\section{Dirichlet \texorpdfstring{$L$}{L}-functions}\label{sec:dirichlet}

Let $\chi$ be a fixed Dirichlet character modulo $q$.  Recall that
\[
|\chi(n)|^2=\mathbf 1_{(n,q)=1}.
\]
We write $\varphi(q)=\#(\Z/q\Z)^\times$ for Euler's totient function and $\mu$ for the Möbius function, defined by $\mu(1)=1$, $\mu(n)=(-1)^r$ if $n$ is a product of $r$ distinct primes, and $\mu(n)=0$ if $n$ is divisible by the square of a prime.

\begin{lemma}\label{lem:dirichlet-energy}
For fixed $q$ and $\sigma\in\R$,
\begin{equation}\label{eq:dirichlet-energy}
\sum_{n\le N}|\chi(n)|^2n^{-2\sigma}
=
\sum_{\substack{n\le N\\(n,q)=1}}n^{-2\sigma}.
\end{equation}
If $\sigma<1/2$, then
\begin{equation}\label{eq:dir-left-asymp}
\sum_{\substack{n\le N\\(n,q)=1}}n^{-2\sigma}
=
\frac{\varphi(q)}{q}\frac{N^{1-2\sigma}}{1-2\sigma}
+o_{q,\sigma}(N^{1-2\sigma}),
\end{equation}
where the notation $o_{q,\sigma}(N^{1-2\sigma})$ means that, for fixed
$q$ and $\sigma$, the ratio of the error term to $N^{1-2\sigma}$
tends to $0$ as $N\to\infty$.\\
At $\sigma=1/2$,
\begin{equation}\label{eq:dir-center-asymp}
\sum_{\substack{n\le N\\(n,q)=1}}\frac1n
=\frac{\varphi(q)}q\log N+O_q(1),
\end{equation}
and for $\sigma>1/2$ the sums remain bounded as $N\to\infty$.
\end{lemma}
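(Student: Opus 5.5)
The identity \eqref{eq:dirichlet-energy} is immediate from $|\chi(n)|^2=\mathbf 1_{(n,q)=1}$. For the asymptotics, I would detect coprimality with Möbius inversion over divisors of $q$:
\[
\mathbf 1_{(n,q)=1}=\sum_{d\mid (n,q)}\mu(d).
\]
This gives
\[
\sum_{\substack{n\le N\\(n,q)=1}}n^{-2\sigma}
=\sum_{d\mid q}\mu(d)d^{-2\sigma}\sum_{m\le N/d}m^{-2\sigma}.
\]
Since $q$ is fixed, this is a finite linear combination, with at most $\tau(q)$ terms, of truncated zeta energies $E(x,\sigma):=\sum_{m\le x}m^{-2\sigma}$ at $x=N/d$. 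The problem therefore reduces to the asymptotics of $E(x,\sigma)$ already used in the proof of Theorem~\ref{thm:zeta-intro}.

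For $\sigma<1/2$, I would use \eqref{eq:zeta-energy-left} in the form
\[
E(x,\sigma)=\frac{x^{1-2\sigma}}{1-2\sigma}+O_\sigma(1+x^{-2\sigma}),
\]
which is valid for real $x\ge1$ by the same Euler summation. The case $N/d<1$ does not arise once $N\ge q$. Inserting $x=N/d$, the main term is
\[
\frac{N^{1-2\sigma}}{1-2\sigma}\sum_{d\mid q}\mu(d)d^{-2\sigma}d^{-(1-2\sigma)}
=\frac{N^{1-2\sigma}}{1-2\sigma}\sum_{d\mid q}\frac{\mu(d)}{d}
=\frac{\varphi(q)}{q}\frac{N^{1-2\sigma}}{1-2\sigma}.
\]
The error is $O_{q,\sigma}(1+N^{-2\sigma})$, and this is $o(N^{1-2\sigma})$ because $1-2\sigma>0$ and $N^{-2\sigma}/N^{1-2\sigma}=1/N$. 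That proves \eqref{eq:dir-left-asymp}.

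At $\sigma=1/2$, I would write $E(x,1/2)=\log x+O(1)$ for $x\ge1$. This gives
\[
\sum_{d\mid q}\frac{\mu(d)}{d}\bigl(\log N-\log d+O(1)\bigr)
=\frac{\varphi(q)}{q}\log N+O_q(1),
\]
which is \eqref{eq:dir-center-asymp}. For $\sigma>1/2$, no inversion is needed: the sum is bounded by $\zeta(2\sigma)<\infty$ by comparison with the full series.

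I do not expect a real obstacle here. The only points needing care are that the Euler-summation error terms hold uniformly for real $x\ge1$, and that the fixed $q$ keeps the number of divisor terms, and hence the accumulated $O$-constants, bounded.
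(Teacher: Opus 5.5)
Your proof is correct, and it takes a different route from the paper's by changing the order of operations.

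The paper applies inclusion--exclusion only to the unweighted counting function, obtaining $A_q(x)=\frac{\varphi(q)}{q}x+O_q(1)$. It then performs one Abel summation against $x^{-2\sigma}$. That forces a case split on the sign of $\sigma$ for the error integral $\int_1^N x^{-2\sigma-1}\dd x$, and a separate remark for $\sigma=0$.

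You instead apply M\"obius inversion to the weighted sum itself, writing it as $\sum_{d\mid q}\mu(d)d^{-2\sigma}E(N/d,\sigma)$. The whole lemma then reduces to the zeta energy asymptotic, evaluated at the points $N/d$. The main terms recombine through $\sum_{d\mid q}\mu(d)/d=\varphi(q)/q$. The errors inherit the uniform shape $O_\sigma(1+x^{-2\sigma})$, giving $O_{q,\sigma}(1+N^{-2\sigma})$ with no case analysis. The only extra input your route needs is \eqref{eq:zeta-energy-left} at non-integer $x$; you flag this correctly, and it follows from the same Euler summation.

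What each approach buys:
\begin{itemize}
\item Yours exploits the multiplicative structure of $\mathbf 1_{(n,q)=1}$ and is a little more economical here, since it recycles the zeta computation.
\item The paper's uses only a linear asymptotic for $\sum_{n\le x}|a_n|^2$. That makes it portable: it is exactly the template reused in Lemma~\ref{lem:modular-energy}, where the Rankin--Selberg count is the available input and the weights $|\lambda_f(n)|^2$ have no comparably simple finite divisor-sum decomposition.
\end{itemize}
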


\begin{proof}
Inclusion--exclusion gives
\begin{align}
A_q(x)
&:=\sum_{n\le x}\mathbf 1_{(n,q)=1}
=\sum_{d\mid q}\mu(d)\left\lfloor\frac{x}{d}\right\rfloor\notag\\
&=x\sum_{d\mid q}\frac{\mu(d)}d+O\left(\sum_{d\mid q}1\right)
=\frac{\varphi(q)}q x+O_q(1).
\label{eq:coprime-count}
\end{align}
Set $c_q:=\varphi(q)/q$.  For $\sigma\neq0$, Abel summation yields
\begin{equation}\label{eq:abel-dir}
\sum_{\substack{n\le N\\(n,q)=1}}n^{-2\sigma}
=A_q(N)N^{-2\sigma}
+2\sigma\int_1^N A_q(x)x^{-2\sigma-1}\dd x.
\end{equation}
Substituting $A_q(x)=c_qx+O_q(1)$ gives
\begin{align}
\sum_{\substack{n\le N\\(n,q)=1}}n^{-2\sigma}
&=c_qN^{1-2\sigma}
+2\sigma c_q\int_1^Nx^{-2\sigma}\dd x\notag\\
&\quad+O_q(N^{-2\sigma})
+O_{q,\sigma}\left(\int_1^Nx^{-2\sigma-1}\dd x\right).
\label{eq:dir-Abel-expanded}
\end{align}
Suppose first that $\sigma<1/2$ and $\sigma\neq0$.  Since
\[
\int_1^Nx^{-2\sigma}\dd x
=\frac{N^{1-2\sigma}-1}{1-2\sigma},
\]
the two main terms in \eqref{eq:dir-Abel-expanded} combine to
\begin{align*}
c_qN^{1-2\sigma}
+\frac{2\sigma c_q}{1-2\sigma}(N^{1-2\sigma}-1)
&=\frac{c_q}{1-2\sigma}N^{1-2\sigma}
+O_{q,\sigma}(1).
\end{align*}
The remaining terms are $o_{q,\sigma}(N^{1-2\sigma})$: indeed, $N^{-2\sigma}/N^{1-2\sigma}=N^{-1}$, while
\[
\int_1^Nx^{-2\sigma-1}\dd x
=
\begin{cases}
O_\sigma(1),&\sigma>0,\\
O(\log N),&\sigma=0,\\
O_\sigma(N^{-2\sigma}),&\sigma<0,
\end{cases}
\]
and each of these is $o(N^{1-2\sigma})$.  The case $\sigma=0$ follows directly from \eqref{eq:coprime-count}.  This proves \eqref{eq:dir-left-asymp}.

At $\sigma=1/2$, partial summation gives
\begin{align*}
\sum_{\substack{n\le N\\(n,q)=1}}\frac1n
&=\frac{A_q(N)}N+\int_1^N\frac{A_q(x)}{x^2}\dd x\\
&=c_q+O_q(N^{-1})
+c_q\int_1^N\frac{\dd x}{x}
+O_q\left(\int_1^N\frac{\dd x}{x^2}\right)\\
&=c_q\log N+O_q(1),
\end{align*}
which is \eqref{eq:dir-center-asymp}.  Finally, if $\sigma>1/2$,
\[
0\le\sum_{\substack{n\le N\\(n,q)=1}}n^{-2\sigma}
\le\sum_{n=1}^{\infty}n^{-2\sigma}<\infty,
\]
so the partial sums are bounded.
\end{proof}

\begin{proof}[Proof of Theorem~\ref{thm:dirichlet-intro}]
Lemma~\ref{lem:dirichlet-energy} verifies hypothesis \ref{H1intro} with $\alpha=1/2$.

Let $\mathcal P_N$ be the dyadic prime set defined in \eqref{eq:zeta-QN}.  For $N>2q$ and $p\in\mathcal P_N$, one has $p>q$ and hence $p\nmid q$, so $|\chi(p)|=1$.  Write $\chi(p)=e^{i\vartheta_p}$.  Since the rotation $z_p\mapsto e^{i\vartheta_p}z_p$ preserves normalized Haar measure on $\T$, the block
\[
\sum_{p\in\mathcal P_N}\chi(p)p^{-\sigma}z_p
\]
has the same distribution under Haar measure as
\[
\sum_{p\in\mathcal P_N}p^{-\sigma}z_p.
\]
Consequently, for $p,r\in\mathcal P_N$ and $\sigma$ in a compact interval $I$,
\[
\frac{|\chi(p)|p^{-\sigma}}{|\chi(r)|r^{-\sigma}}
=\frac{p^{-\sigma}}{r^{-\sigma}}
\le2^{\max_{\tau\in I}|\tau|},
\]
and
\[
\sum_{p\in\mathcal P_N}|\chi(p)|^2p^{-2\sigma}
=\sum_{p\in\mathcal P_N}p^{-2\sigma}
=N^{1-2\sigma+o(1)}
\]
locally uniformly for $\sigma<1/2$, by the zeta calculation.  Taking the abstract set $\mathcal Q_N$ to be $\mathcal P_N$ verifies hypothesis \ref{H2intro}.

Among any $q$ consecutive integers there is an integer congruent to $1$ modulo $q$, hence coprime to $q$.  Therefore
\begin{equation}\label{eq:MN-dir}
N-q+1\le M_{N,\chi}\le N,
\end{equation}
and
\[
\frac{\log M_{N,\chi}}{\log N}\longrightarrow1.
\]
Theorem~\ref{thm:abstract-main} now gives \eqref{eq:dir-potential-intro} and \eqref{eq:dir-measure-intro}.
\end{proof}

\begin{remark}\label{rem:principal-character}
The argument makes no distinction between principal and nonprincipal characters.  In particular, the pole of the limiting principal $L$-function at $s=1$ has no effect on the vertical mean-motion exponent of its finite truncations.
\end{remark}

\section{Holomorphic modular forms}\label{sec:modular}

Let $f$ be the fixed primitive holomorphic Hecke eigenform introduced in Section~\ref{sec:main-results}: it has even weight $k\ge2$, fixed level $Q$, trivial Dirichlet character, and no complex multiplication.  Its normalized coefficients are $\lambda_f(n)=a_f(n)n^{-(k-1)/2}$.

\subsection{Rankin--Selberg quadratic energy}

The classical Rankin--Selberg theorem of Rankin \cite{Rankin1939} and Selberg \cite{Selberg1940} gives
\begin{equation}\label{eq:rankin-selberg}
A_f(x):=\sum_{n\le x}|\lambda_f(n)|^2
=c_fx+O_f(x^{3/5}),
\qquad c_f>0.
\end{equation}

\begin{lemma}[Weighted Rankin--Selberg energy]\label{lem:modular-energy}
For fixed $\sigma<1/2$,
\begin{equation}\label{eq:weighted-RS-left}
\sum_{n\le N}|\lambda_f(n)|^2n^{-2\sigma}
=
\frac{c_f}{1-2\sigma}N^{1-2\sigma}
+o_{f,\sigma}(N^{1-2\sigma}).
\end{equation}
At $\sigma=1/2$,
\begin{equation}\label{eq:weighted-RS-center}
\sum_{n\le N}\frac{|\lambda_f(n)|^2}{n}
=c_f\log N+O_f(1),
\end{equation}
and for $\sigma>1/2$ the weighted partial sums remain bounded.
\end{lemma}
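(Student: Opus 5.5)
The plan is to repeat the Abel-summation argument of Lemma~\ref{lem:dirichlet-energy}, with the coprimality count $A_q(x)=c_qx+O_q(1)$ replaced by the Rankin--Selberg asymptotic \eqref{eq:rankin-selberg}, $A_f(x)=c_fx+R_f(x)$ with $R_f(x)=O_f(x^{3/5})$. Since $|\lambda_f(1)|^2=1$, we have $A_f(x)\ge1$ for $x\ge1$ and $A_f(x)=0$ for $x<1$. For $\sigma\ne0$, partial summation gives
\[
\sum_{n\le N}|\lambda_f(n)|^2n^{-2\sigma}
=A_f(N)N^{-2\sigma}+2\sigma\int_1^NA_f(x)x^{-2\sigma-1}\dd x .
\]
The case $\sigma=0$ is \eqref{eq:rankin-selberg} itself.

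For $\sigma<1/2$, $\sigma\neq0$, insert $A_f=c_fx+R_f$. The main terms combine exactly as in the Dirichlet case to
\[
c_fN^{1-2\sigma}+\frac{2\sigma c_f}{1-2\sigma}(N^{1-2\sigma}-1)=\frac{c_f}{1-2\sigma}N^{1-2\sigma}+O_{f,\sigma}(1).
\]
The boundary error is $R_f(N)N^{-2\sigma}=O(N^{3/5-2\sigma})=o(N^{1-2\sigma})$. The integral error is bounded by $\int_1^Nx^{3/5-2\sigma-1}\dd x$, and this is the only point needing slightly more care than before. If $3/5-2\sigma>0$ the integral is $O_\sigma(N^{3/5-2\sigma})$; if $3/5-2\sigma=0$ it is $O(\log N)$; if $3/5-2\sigma<0$ it is $O_\sigma(1)$. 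In each case the bound is $o(N^{1-2\sigma})$, because $1-2\sigma>0$ and $1-2\sigma>3/5-2\sigma$. This proves \eqref{eq:weighted-RS-left}.

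At $\sigma=1/2$, the same formula reads
\[
\sum_{n\le N}\frac{|\lambda_f(n)|^2}{n}=\frac{A_f(N)}{N}+\int_1^N\frac{A_f(x)}{x^2}\dd x .
\]
The first term is $c_f+O(N^{-2/5})$. The integral is $c_f\log N+O\bigl(\int_1^\infty x^{-7/5}\dd x\bigr)=c_f\log N+O_f(1)$, which gives \eqref{eq:weighted-RS-center}.

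For $\sigma>1/2$ there is no comparison with $\zeta(2\sigma)$ available, since the $\lambda_f(n)$ are not bounded. Instead, use the same formula with $\sigma>0$: the boundary term $A_f(N)N^{-2\sigma}\ll N^{1-2\sigma}$ is bounded, and $\int_1^\infty A_f(x)x^{-2\sigma-1}\dd x\ll\int_1^\infty x^{-2\sigma}\dd x<\infty$. As the terms are nonnegative, the partial sums are monotone and bounded.

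The main obstacle is only the bookkeeping of the $x^{3/5}$ error term across the three sign regimes of $3/5-2\sigma$. The essential input is that the Rankin--Selberg exponent $3/5$ is strictly less than $1$; any exponent $\theta<1$ would suffice.
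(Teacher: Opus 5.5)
Your proposal is correct and follows essentially the same route as the paper. Both use Abel summation against $A_f(x)=c_fx+O_f(x^{3/5})$, split the error integral $\int_1^N x^{-2/5-2\sigma}\dd x$ into the same three cases according to the sign of $3/5-2\sigma$, handle $\sigma=1/2$ by the same partial summation, and use $A_f(x)\ll_f x$ with nonnegativity of the terms for $\sigma>1/2$.
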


\begin{proof}
For $\sigma\neq0$, Abel summation gives
\begin{equation}\label{eq:modular-Abel}
\sum_{n\le N}|\lambda_f(n)|^2n^{-2\sigma}
=A_f(N)N^{-2\sigma}
+2\sigma\int_1^NA_f(x)x^{-2\sigma-1}\dd x.
\end{equation}
Insert \eqref{eq:rankin-selberg}.  The contribution of the main term $c_fx$ is
\begin{align*}
c_fN^{1-2\sigma}
+2\sigma c_f\int_1^Nx^{-2\sigma}\dd x
&=c_fN^{1-2\sigma}
+\frac{2\sigma c_f}{1-2\sigma}(N^{1-2\sigma}-1)\\
&=\frac{c_f}{1-2\sigma}N^{1-2\sigma}
+O_{f,\sigma}(1).
\end{align*}
The contribution of the error $O_f(x^{3/5})$ is
\begin{equation}\label{eq:RS-weighted-error}
O_f(N^{3/5-2\sigma})
+O_{f,\sigma}\left(\int_1^Nx^{-2/5-2\sigma}\dd x\right).
\end{equation}
The first term divided by $N^{1-2\sigma}$ is $O_f(N^{-2/5})$.  For the integral term there are three cases:
\[
\int_1^Nx^{-2/5-2\sigma}\dd x
=
\begin{cases}
O_{\sigma}(N^{3/5-2\sigma}),&\sigma<3/10,\\
O(\log N),&\sigma=3/10,\\
O_{\sigma}(1),&3/10<\sigma<1/2.
\end{cases}
\]
After division by $N^{1-2\sigma}$ these are respectively $O(N^{-2/5})$, $O(N^{-2/5}\log N)$, and $O(N^{-(1-2\sigma)})$, all tending to zero.  This proves \eqref{eq:weighted-RS-left}; the case $\sigma=0$ follows directly from \eqref{eq:rankin-selberg}.

At $\sigma=1/2$, partial summation yields
\begin{align*}
\sum_{n\le N}\frac{|\lambda_f(n)|^2}{n}
&=\frac{A_f(N)}N+\int_1^N\frac{A_f(x)}{x^2}\dd x\\
&=c_f+O_f(N^{-2/5})
+c_f\int_1^N\frac{\dd x}{x}
+O_f\left(\int_1^Nx^{-7/5}\dd x\right)\\
&=c_f\log N+O_f(1),
\end{align*}
proving \eqref{eq:weighted-RS-center}.  Finally, if $\sigma>1/2$, then $A_f(x)\ll_fx$, and the same partial-summation formula shows that the infinite series $\sum_{n\ge1}|\lambda_f(n)|^2n^{-2\sigma}$ converges.
\end{proof}

It follows from Lemma~\ref{lem:modular-energy} that
\begin{equation}\label{eq:modular-H1}
\frac1{2\log N}
\log\sum_{n\le N}|\lambda_f(n)|^2n^{-2\sigma}
\longrightarrow
\left(\frac12-\sigma\right)_+.
\end{equation}

\subsection{Sato--Tate and isolated prime coordinates}

Let $p\nmid Q$ be prime.  The Ramanujan--Petersson theorem for holomorphic forms, proved by Deligne \cite{Deligne1974}, gives $|a_f(p)|\le2p^{(k-1)/2}$.  Because the Dirichlet character is trivial, the normalized Hecke eigenvalue $\lambda_f(p)$ is real.  Hence there is an angle $\theta_p\in[0,\pi]$ such that
\begin{equation}\label{eq:ST-param}
\lambda_f(p)=2\cos\theta_p.
\end{equation}

Since $f$ has no complex multiplication, the Sato--Tate theorem \cite[Corollary~7.1.7]{BLGG2011} states that the angles $\theta_p$, over unramified primes, are equidistributed on $[0,\pi]$ with respect to
\begin{equation}\label{eq:ST-measure}
\dd\mu_{\mathrm{ST}}(\theta)
=\frac2\pi\sin^2\theta\dd\theta.
\end{equation}
Equivalently, the values $\lambda_f(p)=2\cos\theta_p$ are distributed on $[-2,2]$ according to the semicircle measure
\begin{equation}\label{eq:ST-semicircle}
\dd\nu_{\mathrm{ST}}(x)
=\frac1{2\pi}\sqrt{4-x^2}\dd x.
\end{equation}
Let
\[
\mathcal A:=\{x\in[-2,2]:1\le|x|\le2\},
\qquad
\eta:=\nu_{\mathrm{ST}}(\mathcal A)>0.
\]
The boundary of $\mathcal A$ has $\nu_{\mathrm{ST}}$-measure zero.  Therefore \cite[Corollary~7.1.7]{BLGG2011} gives
\begin{equation}\label{eq:ST-count-global}
\#\{p\le x:p\nmid Q,\ 1\le|\lambda_f(p)|\le2\}
=\eta\,\pi(x)+o(\pi(x)).
\end{equation}
Applying this at $x=N$ and $x=N/2$, and using the prime number theorem, yields
\begin{equation}\label{eq:ST-count-dyadic}
\#\{N/2<p\le N:p\nmid Q,\ 1\le|\lambda_f(p)|\le2\}
=\left(\frac{\eta}{2}+o(1)\right)\frac{N}{\log N}.
\end{equation}
Define
\begin{equation}\label{eq:modular-PN}
\mathcal P_{N,f}
:=
\{p:N/2<p\le N,\ p\nmid Q,\ 1\le|\lambda_f(p)|\le2\}.
\end{equation}
Then $\#\mathcal P_{N,f}\to\infty$.

Let $I\subset\R$ be compact and put $M_I:=\max_{\tau\in I}|\tau|$.  If $p,r\in\mathcal P_{N,f}$ and $\sigma\in I$, then $1\le|\lambda_f(p)|,|\lambda_f(r)|\le2$ and $1/2<p/r<2$.  Hence
\begin{equation}\label{eq:modular-comparable}
\frac{1}{2^{M_I+1}}
\le
\frac{|\lambda_f(p)|p^{-\sigma}}{|\lambda_f(r)|r^{-\sigma}}
\le
2^{M_I+1}.
\end{equation}
Thus the isolated weights are uniformly comparable.  Moreover, for $\sigma<1/2$,
\begin{align}
B_{N,f}(\sigma)
&:=\sum_{p\in\mathcal P_{N,f}}|\lambda_f(p)|^2p^{-2\sigma}.
\label{eq:modular-isolated-energy-def}
\end{align}
Since $1\le|\lambda_f(p)|^2\le4$ and $N/2<p\le N$,
\[
2^{-2|\sigma|}N^{-2\sigma}\#\mathcal P_{N,f}
\ll_{\sigma} B_{N,f}(\sigma)
\ll_{\sigma}N^{-2\sigma}\#\mathcal P_{N,f}.
\]
Together with \eqref{eq:ST-count-dyadic}, this gives
\begin{equation}\label{eq:modular-isolated-energy}
B_{N,f}(\sigma)=N^{1-2\sigma+o(1)},
\end{equation}
with the $o(1)$ uniform for $\sigma$ in compact subsets of $(-\infty,1/2)$.  Hence
\begin{equation}\label{eq:modular-H2}
\frac1{2\log N}\log B_{N,f}(\sigma)
\longrightarrow\frac12-\sigma
\end{equation}
locally uniformly on $(-\infty,1/2)$.

\subsection{Proof of the modular theorem}

\begin{proof}[Proof of Theorem~\ref{thm:modular-intro}]
Equation \eqref{eq:modular-H1} verifies hypothesis \ref{H1intro} with $\alpha=1/2$.  Taking the abstract set $\mathcal Q_N$ to be $\mathcal P_{N,f}$, equations \eqref{eq:modular-comparable} and \eqref{eq:modular-H2} verify hypothesis \ref{H2intro}.

By \eqref{eq:ST-count-dyadic}, for all sufficiently large $N$ there is a prime $p\in(N/2,N]$ with $\lambda_f(p)\neq0$.  Consequently
\[
N/2<M_{N,f}\le N,
\qquad
\frac{\log M_{N,f}}{\log N}\longrightarrow1.
\]
Theorem~\ref{thm:abstract-main} now gives \eqref{eq:modular-potential-intro} and \eqref{eq:modular-measure-intro}.

Finally, from $a_f(n)=\lambda_f(n)n^{(k-1)/2}$,
\begin{align}
\mathcal L_N(s,f)
&=\sum_{n\le N}a_f(n)n^{-s}\notag\\
&=\sum_{n\le N}\lambda_f(n)n^{-(s-(k-1)/2)}\notag\\
&=L_N\left(s-\frac{k-1}{2},f\right).
\label{eq:classical-shift}
\end{align}
Thus the real parts of all zeros are translated by $(k-1)/2$, and the concentration line $\Ree w=1/2$ in the normalized variable $w=s-(k-1)/2$ becomes
\[
\Ree s=\frac12+\frac{k-1}{2}=\frac{k}{2}.
\]
\end{proof}

\begin{corollary}\label{cor:level-one}
In particular, Theorem~\ref{thm:modular-intro} applies to every normalized holomorphic Hecke cusp eigenform for $\mathrm{SL}_2(\Z)$.  Zero-density concentration occurs on $\Ree s=1/2$ for the normalized Hecke-eigenvalue truncations and on $\Ree s=k/2$ for the classical Fourier-coefficient truncations.
\end{corollary}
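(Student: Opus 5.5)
The corollary is a specialization of Theorem~\ref{thm:modular-intro}, so the plan is to check that every normalized holomorphic Hecke cusp eigenform for $\mathrm{SL}_2(\Z)$ satisfies the standing hypotheses placed on $f$ in Section~\ref{sec:main-results}. Those hypotheses are: $f$ is primitive, $f$ has even weight $k\ge2$, $f$ has fixed level $Q$, $f$ has trivial Dirichlet character, and $f$ has no complex multiplication.

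First, I take $Q=1$. The only Dirichlet character modulo $1$ is trivial, so the character condition holds automatically. The weight is automatically even, since $-I\in\mathrm{SL}_2(\Z)$ forces odd-weight forms to vanish; nonzero cusp forms exist only for $k\ge12$, so $k\ge2$ holds. At level one there are no oldforms: the space of newforms is the whole cusp space, so a Hecke eigenform normalized by $a_f(1)=1$ is primitive. This follows from the multiplicity-one/newform theory cited in \cite[Sections~6.6--6.8]{IwaniecTopics}, and also from the fact that the Hecke operators $T_n$ generate a commutative algebra acting semisimply with one-dimensional joint eigenspaces.

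The one point needing an argument is the absence of complex multiplication, and I expect this to be the main obstacle. A CM newform arises from a Hecke character of an imaginary quadratic field $K$. Its level is divisible by $|d_K|$, the absolute value of the discriminant, which is at least $3$. Equivalently, a form with CM by $K$ satisfies $f\otimes\chi_K=f$ with $\chi_K$ the quadratic character of $K$; twisting by a character of conductor $|d_K|>1$ produces a form whose level is divisible by $d_K^2$ unless the original level already absorbs it, and this is impossible for $Q=1$. So no level-one eigenform has CM. I would cite the standard description of CM forms, for instance Ribet's characterization via $a_f(p)=0$ for the primes inert in $K$, together with the level computation for forms induced from Hecke characters.

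With all hypotheses verified, Theorem~\ref{thm:modular-intro} applies directly. It gives concentration on $\Ree s=1/2$ for the truncations $L_N(s,f)$ and, via the shift \eqref{eq:classical-shift}, concentration on $\Ree s=k/2$ for the classical truncations $\mathcal L_N(s,f)$.
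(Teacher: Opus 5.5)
Your proposal is correct and follows the paper's implicit argument: the corollary is the specialization $Q=1$ of Theorem~\ref{thm:modular-intro}, and you check exactly the standing hypotheses it needs (trivial character, even weight $k\ge12$, primitivity, absence of complex multiplication). The paper gives no separate proof, so your justification of the no-CM condition, namely that a CM newform has level divisible by $|d_K|\ge3$ and so cannot have level one, usefully fills in the one nontrivial point; that argument suffices on its own, whereas the more loosely worded twisting alternative needs strong multiplicity one to be made precise.
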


\section{Conclusion}

We have proved a general criterion that converts quadratic coefficient growth, together with a sufficiently large family of isolated prime coordinates, into concentration of normalized vertical zero density on a single line.  The Bohr--Jessen identity turns the vertical logarithmic mean into a torus Mahler measure, while the translation-uniform Steinhaus estimate prevents the isolated linear block from spending excessive Haar measure near zero.  The resulting lower bound matches the elementary $L^2$ upper bound at logarithmic scale.

The limiting potential is
\[
V_\alpha(\sigma)=(\alpha-\sigma)_+,
\]
whose unique corner gives $V_\alpha''=\delta_\alpha$.  For the partial sums of the Riemann zeta function and of fixed Dirichlet $L$-functions, $\alpha=1/2$.  Rankin--Selberg and Sato--Tate give the same value for normalized holomorphic Hecke eigenforms of fixed level without complex multiplication; in the classical Fourier-coefficient normalization the corresponding line is $\Ree s=k/2$.\\
The present argument is intrinsically formulated in the Jessen
mean-motion regime, with $T\to\infty$ for each fixed $N$ before
$N\to\infty$.  A coupled $(N,T)$ limit would require quantitative
control of the passage from vertical averages to Haar averages that
is uniform in the growing frequency set
$\{\log p:p\le N\}$, and no such uniform estimate is used here.\\

The conclusion is compatible with the much wider support of individual zeros for a fixed truncation: concentration of normalized vertical density is a measure-theoretic statement and does not assert geometric concentration of the entire zero set.

\end{document}